\tikzset{round left paren/.style={ncbar=0.5cm,out=120,in=-120}}
\tikzset{round right paren/.style={ncbar=0.5cm,out=60,in=-60}}
\definecolor{lightgray}{gray}{0.8}
\definecolor{ctcolor}{gray}{0.95}
\definecolor{ctucolor}{gray}{0.85}
\newlist{theoremenumerate}{enumerate}{1}
\setlist[theoremenumerate]{label=(\arabic{theoremenumeratei}), ref=\thetheorem.(\arabic{theoremenumeratei}),noitemsep}
\newtheoremstyle{mytheoremstyle}
    {5pt}         
    {5pt}         
    {\itshape}    
    {\parindent}  
    {\bf}         
    {.}           
    {.5em}        
    {}            
\theoremstyle{mytheoremstyle}
\newtheorem{theorem}{Theorem}[subsection]
\newtheorem{lemma}[theorem]{Lemma}
\theoremstyle{remark}
\newtheorem{remark}[theorem]{Remark}
\newtheoremstyle{mytdefintionstyle}
    {5pt}         
    {5pt}         
    {\rm}         
    {\parindent}  
    {\bf}         
    {.}           
    {.5em}        
    {}            
\theoremstyle{mytdefintionstyle}
\newtheorem{definition}[theorem]{Definition}
\newtheorem{example}[theorem]{Example}
\newtheorem{construction}[theorem]{Construction}
\newtheorem{notation}[theorem]{Notation}
\newtheorem*{notationnonumber}{Notation}
\newtheorem*{convention}{Convention}
\newcolumntype{C}[1]{>{\centering\arraybackslash$}p{#1}<{$}}
\newlength{\mycolwd}
\newcolumntype{L}{>{\raggedleft}p{0.28\textwidth}}
\newcolumntype{R}{p{0.8\textwidth}}
\newcommand{\thickhline}{%
    \noalign {\ifnum 0=`}\fi \hrule height 1pt
    \futurelet \reserved@a \@xhline
}
\newcolumntype{"}{@{\hskip\tabcolsep\vrule width 1pt\hskip\tabcolsep}}
\definecolor{ExQ}{HTML}{0000FF}
\definecolor{Dec}{HTML}{E07B00}
\newcommand{\AC}{\mathbf{A}}
\newcommand{\BC}{\mathbf{B}}
\newcommand{\XC}{\mathbf{X}}
\newcommand{\PC}{\mathbf{P}}
\newcommand{\pmatrow}[2]{ \begin{pmatrix}{#1} & {#2} \end{pmatrix} }
\newcommand{\pmatcol}[2]{ \begin{pmatrix}{#1} \\ {#2} \end{pmatrix} }
\newcommand{\N}{\mathbb{N}}
\newcommand{\Z}{\mathbb{Z}}
\newcommand{\nindex}{\underline{n}}
\newcommand{\Ab}{\mathbf{Ab}}
\newcommand{\op}{\mathrm{op}}
\newcommand{\FreydName}{\mathrm{Freyd}}
\newcommand{\Freyd}{\mathcal{A}}
\DeclareMathOperator{\oF}{\otimes_{\That}}
\newcommand{\unitF}{1_{\FreydName}}
\newcommand{\id}{\mathrm{id}}
\DeclareMathOperator{\Hom}{\mathrm{Hom}}
\DeclareMathOperator{\Homlin}{\mathrm{Hom} }  
\DeclareMathOperator{\Homlincok}{\Hom^{\mathrm{r}}}
\DeclareMathOperator{\Mor}{\mathrm{Mor}}
\DeclareMathOperator{\IHom}{\mathrm{\underline{Hom}}}
\DeclareMathOperator{\kernel}{\mathrm{ker}}
\DeclareMathOperator{\cokernel}{\mathrm{coker}}
\newcommand{\emb}{\mathrm{emb}}
\newcommand{\fpmodl}{\text{-}\mathrm{fpmod}}
\newcommand{\fpf}{\mathrm{fp}}
\newcommand{\Rows}{\mathrm{Rows}}
\newcommand*{\ie}{i.e.\@\xspace}
\newcommand*{\eg}{e.g.\@\xspace}
\newcommand{\fp}{f.\,p.\@\xspace}
\newcommand{\Fp}{F.\,p.\@\xspace}
\DeclareMathOperator{\T}{\mathrm{T}}
\DeclareMathOperator{\That}{\mathrm{ \text{$\widehat{T}$}}}
\DeclareMathOperator{\Ass}{\mathrm{\Pi}}
\DeclareMathOperator{\Asshat}{\mathrm{ \text{$\widehat{\Pi}$}}}
\DeclareMathOperator{\Br}{\mathrm{\Gamma}}
\DeclareMathOperator{\Brhat}{\mathrm{ \text{$\widehat{\Gamma}$}}}
\DeclareMathOperator{\LU}{\mathrm{\Psi}}
\DeclareMathOperator{\LUhat}{\mathrm{ \text{$\widehat{\Psi}$}}}
\DeclareMathOperator{\RU}{\mathrm{\Omega}}
\DeclareMathOperator{\RUhat}{\mathrm{ \text{$\widehat{\Omega}$}}}
\DeclareMathOperator{\PreEv}{\mathrm{ev}}
\newcommand{\PreEvhat}{\widehat{\mathrm{ev}}}
\DeclareMathOperator{\PreCoev}{\mathrm{coev}}
\newcommand{\PreCoevhat}{\widehat{\mathrm{coev}}}
\author{Martin Bies}
\thanks{The work of M. Bies is supported by the \emph{Wiener-Anspach foundation}. M. Bies thanks the \emph{University of Siegen} and the \emph{GAP Singular Meeting and School} for hospitality during this project.}
\address{Service de Physique Théorique et Mathématique, Université Libre de Bruxelles and International Solvay Institutes, Campus Plaine C.P. 231, B-1050 Bruxelles, Belgium}
\email{\href{mailto:Martin Bies <martin.bies@ulb.ac.be>}{martin.bies@ulb.ac.be}}
\author{Sebastian Posur}
\thanks{The work of S. Posur is supported by Deutsche Forschungsgemeinschaft (DFG) grant SFB-TRR 195: \emph{Symbolic Tools in Mathematics and their Application}.}
\address{Department of mathematics, University of Siegen, 57068 Siegen, Germany}
\email{\href{mailto:Sebastian Posur <sebastian.posur@uni-siegen.de>}{sebastian.posur@uni-siegen.de}}
\begin{document}

\title[Tensor products of finitely presented functors]{Tensor products of finitely presented functors}

\begin{abstract} 
We study right exact tensor products on the category of finitely presented functors.
As our main technical tool, we use a multilinear version of the universal property of so-called Freyd categories.
Furthermore, we compare our constructions with the Day convolution of arbitrary functors.
Our results are stated in a constructive way and give a unified approach for the implementation
of tensor products in various contexts.
\end{abstract}

\keywords{%
Freyd category,
finitely presented functor,
computable abelian category%
}
\subjclass[2010]{%
18E10, 
18E05, 
18A25, 
}
\maketitle

\tableofcontents

\section{Introduction}
Let $\AC$ be a small additive category. In this paper, we study right exact tensor products on the category $\fpf( \AC^{\op}, \Ab )$
of finitely presented functors $\AC^{\op} \rightarrow \Ab$ with natural transformations as morphisms,
where $\Ab$ denotes the category of abelian groups.
As our main computational tool, we use the so-called Freyd category \cite{FreydRep}, \cite{BelFredCats} $\Freyd( \AC )$,
that comes with a functor $\AC \rightarrow \Freyd( \AC )$ which is universal among all functors from $\AC$ mapping into a category
with cokernels. There is an equivalence $\Freyd( \AC ) \simeq \fpf( \AC^{\op}, \Ab )$ which allows us to state 
all results on finitely presented functors in the language of Freyd categories.
We prefer to use the language of Freyd categories in this paper due to its inherent constructiveness \cite{PosFreyd}.

In \Cref{sec:2CatUniPropOfFreyd}, we give a short introduction to the theory of Freyd categories.
After that, we prove a multilinear $2$-categorical version of their universal property in \Cref{theorem:2_cat_universal_freyd_mult},
which will turn out to be our main technical tool in this paper.

In \Cref{sec:FPMonoidalStructuresOnFreyd}, we introduce the notion of finitely presented promonoidal structures on $\AC$,
and show how they can be extended to right exact monoidal structures on $\Freyd( \AC )$.
In short, finitely presented promonoidal structures can be seen as restrictions of right exact monoidal structures on $\Freyd( \AC )$
to the subcategory $\AC \subseteq \Freyd( \AC )$.
At this point, the proofs for checking the monoidal identities like the pentagon identity can be carried out quite
conveniently due to our multilinear version of the universal property of Freyd categories.

Originally, promonoidal structures on additive categories were introduced by Brian Day in \cite{Day70} and \cite{Day74}.
He showed that promonoidal structures give rise to closed monoidal structures on the category $\Hom( \AC, \Ab )$ of all additive functors $\AC \rightarrow \Ab$.
Since $\Freyd( \AC^{\op } ) \simeq \fpf( \AC, \Ab ) \subseteq \Hom( \AC, \Ab )$,
in \Cref{subsec:ConnectionToDay} we discuss how our construction can be seen as a restriction
of the so-called Day convolution of arbitrary functors to finitely presented ones (\Cref{theorem:restriction_day}).

As applications of the theory presented in this paper,
we show how our findings can be used to induce right exact tensor products on iterated Freyd categories,
and in particular on free abelian categories,
a topic that is also treated in \cite{TensorNori}.

Since the derivation of monoidal structures on $\Freyd( \AC )$ from finitely presented promonoidal structures on $\AC$ is completely constructive,
we complete this paper by presenting the corresponding constructions explicitly.
In particular, this provides a computationally unified approach to tensor products of \fp modules and \fp graded modules,
that can both be interpreted as special instances of categories of \fp functors \cite{PosFreyd}.
A computer implementation of the special case where the promonoidal structure on $\AC$
is actually monoidal is realized within the \texttt{CAP}-project,
a software project for constructive category theory \cite{CAP-project}, \cite{GP_Rundbrief}, \cite{GPSSyntax},
as an own package for \emph{Freyd categories} \cite{FreydCategoriesPackage}.

\begin{convention}
      In this paper, $\Ab$ denotes the category of abelian groups.
      For the precomposition of two morphisms $\alpha: a \rightarrow b$, $\beta: b \rightarrow c$,
      we write $\alpha \cdot \beta$, for their postcomposition, we write $\beta \circ \alpha$.
      Morphisms between direct sums are written in matrix notation, where we use the row convention,
      \eg, a morphism $a \rightarrow b \oplus c$ is written as a $1 \times 2$ row $\pmatrow{\gamma}{\delta}$
      with entries given by morphisms $\gamma: a \rightarrow b$, $\delta: a \rightarrow c$.
\end{convention}

\section{Freyd categories and their universal property} \label{sec:2CatUniPropOfFreyd}

The Freyd category $\Freyd( \AC )$ of an additive category $\AC$ is a universal way of equipping $\AC$ with cokernels.
In this section, we give an introduction to the theory of Freyd categories and prove how we can lift multilinear functors
and natural transformations from $\AC$ to $\Freyd( \AC )$.

\subsection{Preliminaries: Freyd categories}

Let $\AC$ be an additive category.
We recall well-known results about the Freyd category of $\AC$,
that can be found in the original source \cite{FreydRep} by Freyd,
or in \cite{BelFredCats} by Beligiannis (who coined the term \emph{Freyd category}).
A constructive approach to Freyd categories is given in \cite{PosFreyd}.

\begin{definition}
      The \textbf{Freyd category } $\Freyd( \AC )$ consists of the following data:
      \begin{enumerate}
            \item Objects are given by morphisms in $\AC$.
            \item Morphisms from $(a \stackrel{\rho}{\longleftarrow} r)$ to $(a' \stackrel{\rho'}{\longleftarrow} r')$
        are given by morphisms
        $a \stackrel{\alpha}{\longrightarrow} a'$ in $\AC$
        such that there exists another morphism $r \stackrel{\omega}{\longrightarrow} r'$ 
        rendering the diagram
        \begin{center}
         \begin{tikzpicture}[label/.style={postaction={
          decorate,
          decoration={markings, mark=at position .5 with \node #1;}},
          mylabel/.style={thick, draw=none, align=center, minimum width=0.5cm, minimum height=0.5cm,fill=white}}]
          \coordinate (r) at (2.5,0);
          \coordinate (u) at (0,2);
          \node (A) {$a$};
          \node (B) at ($(A)+(r)$) {$r$};
          \node (C) at ($(A) - (u)$) {$a'$};
          \node (D) at ($(B) - (u)$) {$r'$};
          \draw[->,thick] (B) --node[above]{$\rho$} (A);
          \draw[->,thick] (D) --node[above]{$\rho'$} (C);
          \draw[->,thick] (A) --node[left]{$\alpha$} (C);
          \draw[->,thick,dashed] (B) --node[right]{$\omega$} (D);
          \end{tikzpicture}
        \end{center}
        commutative. 
        The morphism $\alpha$ is called the \textbf{morphism datum} and any possible $\omega$ a \textbf{morphism witness}.
        We write $\{\alpha\}$ or $\{\alpha, \omega\}$ for a morphism in $\mathcal{A}(\PC)$,
        depending on whether we would like to highlight a particular choice of a morphism witness.
        Given another morphism $\{\alpha'\}$ from $(a \stackrel{\rho}{\longleftarrow} r)$ to $(a' \stackrel{\rho'}{\longleftarrow} r')$,
        we define it
        to be \textbf{equal in $\Freyd(\PC)$} to $\{\alpha\}$
        if there exists a $\lambda: a \rightarrow r'$ 
        such that
        \[
              ( \alpha - \alpha' ) = \lambda \cdot \rho'.
        \]
        Any such $\lambda$ is called  a \textbf{witness for $\alpha$ and $\alpha'$ being equal}.
  \item Composition, identities, and the additive structure are inherited from $\AC$.
      \end{enumerate}
\end{definition}

\begin{notation}
      We will usually refer to objects 
      in $\AC$ by small letters $a,b,c, \dots$
      and to objects
      in $\Freyd( \AC )$ by capital letters $A,B,C, \dots$.
      Moreover, whenever we want to access the underlying morphism datum
      of a variable representing an object in $\Freyd( \AC )$,
      we use the notation
      $A = (a \xleftarrow{\rho_a} r_a)$,
      $B = (b \xleftarrow{\rho_b} r_b)$,
      $C = (c \xleftarrow{\rho_c} r_c)$, \dots .
\end{notation}

\begin{remark}[Implicit embeddings]\label{remark:implicit_emb}
      We have a functor
      \begin{align*}
            \emb: \AC \longrightarrow \Freyd( \AC ):
            a \mapsto (a \leftarrow 0) \, .
      \end{align*}
      Every object $a \in \AC$ can be canonically understood as $\mathrm{emb} ( a ) = (a \leftarrow 0 ) \in \Freyd( \AC )$. Whenever the distinction between these two objects is important, we will indeed use this distinguished notation. However, if there is no danger of confusion, we will not use this distinguished notation, in favour of a simplified notation. In the latter case it should always be clear from the context if we are using $a \in \AC$ or $\mathrm{emb} ( a ) \in \Freyd( \AC )$.
\end{remark}

\begin{construction}\label{construction:cokernels_in_freyd}
       The category $\Freyd( \AC )$ has cokernels.
       The cokernel object of a given morphism
       $\{\alpha\}: (a \xleftarrow{\rho_a} r_a) \longrightarrow (b \xleftarrow{\rho_b} r_b)$
       can be constructed as
       \[
            \cokernel( \{\alpha\} ) := (b \xleftarrow{\pmatcol{\rho_b}{\alpha}} r_b \oplus a) \, .
       \]
\end{construction}

It is not necessarily the case that $\Freyd( \AC )$ has kernels.
Recall that a weak kernel of a morphism $\alpha: a \rightarrow b$ in $\AC$
is a morphism $\iota: k \rightarrow a$ in $\AC$ such that $\iota \cdot \alpha = 0$,
and for any other morphism $\tau: t \rightarrow a$ with $\tau \cdot \alpha = 0$,
there exists a \emph{not necessarily unique} morphism $\lambda: t \rightarrow k$
such that $\lambda \cdot \iota = \tau$.
\begin{theorem}[Freyd \cite{FreydRep}]\label{theorem:freyd}
      $\Freyd( \AC )$ 
      is abelian if and only if 
      $\AC$ admits weak kernels.
\end{theorem}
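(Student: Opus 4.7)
The plan is to prove the two implications separately. For the necessity direction, suppose $\Freyd(\AC)$ is abelian and take a morphism $\alpha: a \to b$ in $\AC$. I would transfer it along the embedding $\emb: \AC \to \Freyd(\AC)$ to a morphism $\{\alpha\}: \emb(a) \to \emb(b)$, form its kernel $\{\iota\}: K \to \emb(a)$ in the abelian category $\Freyd(\AC)$, and extract the morphism datum $\iota: k \to a$. Since $\emb(b) = (b \leftarrow 0)$ carries the zero relation, the equation $\{\iota\} \cdot \{\alpha\} = 0$ in $\Freyd(\AC)$ unwinds directly to $\iota \cdot \alpha = 0$ in $\AC$; and given any $\tau: t \to a$ with $\tau \cdot \alpha = 0$, the unique kernel factorisation $\{\ell\} \cdot \{\iota\} = \{\tau\}$ forces $\ell \cdot \iota = \tau$ in $\AC$ by the same ``trivial relation'' argument applied now on the target $\emb(a)$. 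Hence $\iota$ is a weak kernel of $\alpha$.

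For the sufficiency direction, assume $\AC$ admits weak kernels. Since $\Freyd(\AC)$ is additive and has cokernels by \Cref{construction:cokernels_in_freyd}, the plan is to construct kernels explicitly. Given $\{\alpha\}: A \to B$ with $A = (a \xleftarrow{\rho_a} r_a)$, $B = (b \xleftarrow{\rho_b} r_b)$, datum $\alpha$ and witness $\mu: r_a \to r_b$ satisfying $\rho_a \cdot \alpha = \mu \cdot \rho_b$, I would take three weak kernels in $\AC$: a weak kernel $\iota = \bmatrow{\iota_a}{\iota_r}: k \to a \oplus r_b$ of $\pmatcol{\alpha}{-\rho_b}$; a weak kernel $\delta: d \to r_b$ of $\rho_b$; and a weak kernel $\psi: w \to k$ of $\iota$ itself. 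Lifting $\bmatrow{\rho_a}{\mu}$ and $\bmatrow{0}{\delta}$ through $\iota$ yields morphisms $\nu: r_a \to k$ and $\hat\delta: d \to k$, and the candidate kernel object is $K := (k \xleftarrow{\rho_k} r_a \oplus d \oplus w)$, where $\rho_k$ has components $\nu$, $\hat\delta$, $\psi$ on the three summands, with kernel morphism $K \to A$ of datum $\iota_a$.

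To verify the universal property, given $\{\tau\}: T \to A$ with witness $\omega: r_t \to r_a$ and $\{\tau\} \cdot \{\alpha\} = 0$, I would choose a witness $\sigma: t \to r_b$ for $\tau \cdot \alpha = \sigma \cdot \rho_b$, use the weak kernel property of $\iota$ to lift $\bmatrow{\tau}{\sigma}$ to some $\ell: t \to k$, and then assemble a witness certifying that $\{\ell\}$ is a Freyd morphism from $\omega$, from $\delta$ via the key identity $(\rho_t \cdot \sigma - \omega \cdot \mu) \cdot \rho_b = 0$, and from $\psi$ to absorb any residual discrepancy invisible to $\iota$. A parallel computation establishes uniqueness of $\{\ell\}$ modulo the Freyd equivalence. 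Finally, to complete the proof of abelianness I would verify that every monomorphism in $\Freyd(\AC)$ is the kernel of its own cokernel, which is a direct diagram-chase using the explicit kernel and cokernel constructions.

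The main technical obstacle lies in the choice of the relation object for $K$: the naive choice $r_k = r_a$ with $\rho_k = \nu$ fails because $\iota$ is only a weak kernel and hence need not be monic, so an equation $\rho_t \cdot \ell = \omega_k \cdot \nu$ that holds after postcomposition with $\iota$ need not hold in $\AC$. The summand $d$ is precisely what is needed to absorb the error on the $r_b$-coordinate of $\iota$, and the summand $w$ then absorbs the part invisible to $\iota$ altogether. Once this three-step bookkeeping is in place, both existence and uniqueness of the universal factorisation reduce to routine applications of the three weak kernel properties.
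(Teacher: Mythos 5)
The paper does not prove this theorem; it is quoted from Freyd \cite{FreydRep} (see also \cite{BelFredCats}, \cite{PosFreyd}), so there is no in-paper argument to compare against. Judged on its own terms, your necessity direction is correct: since $\emb(a)=(a\leftarrow 0)$ and $\emb(b)=(b\leftarrow 0)$ have zero relation objects, equality of morphisms into them is literal equality of morphism data, so the kernel of $\{\alpha\}\colon\emb(a)\to\emb(b)$ in the abelian category $\Freyd(\AC)$ does hand you a weak kernel of $\alpha$ in $\AC$. Your kernel construction in the sufficiency direction is also correct, and the three summands of $r_a\oplus d\oplus w$ do exactly the bookkeeping you describe: $(\rho_t\cdot\sigma-\omega\cdot\mu)\cdot\rho_b=0$ produces $v\colon r_t\to d$, the residual difference $\rho_t\cdot\ell-\omega\cdot\nu-v\cdot\hat\delta$ is killed by $\iota$ and hence factors through $\psi$, and the same three-step argument applied to any $\{m\}\colon T\to K$ with $\{m\cdot\iota_a\}=0$ shows $\{m\}=0$, so $\{\iota_a\}$ is monic. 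This is a mild repackaging of the usual construction, which takes the weak kernel of $\pmatcol{\alpha}{-\rho_b}$ and then a weak pullback of $\iota_a$ against $\rho_a$; the two presentations of the relation object generate the same subfunctor.

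The gap is in your final step. An additive category with kernels and cokernels is not yet abelian: you must verify \emph{both} that every monomorphism is the kernel of its cokernel \emph{and} that every epimorphism is the cokernel of its kernel (equivalently, that the canonical comparison $\coim(f)\to\im(f)$ is invertible for every $f$). You only mention the first half, and the second half cannot be obtained ``by duality,'' since $\Freyd(\AC)^{\op}$ is not $\Freyd(\AC^{\op})$; it needs its own computation --- e.g.\ for an epimorphism $\{\alpha\}\colon A\to B$ the vanishing of $\cokernel(\{\alpha\})=(b\leftarrow r_b\oplus a)$ yields $u,v$ with $u\cdot\rho_b+v\cdot\alpha=\id_b$, and one then checks that $v$ inverts the induced map $\cokernel(\kernel\{\alpha\})\to B$. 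Both halves are indeed routine given your explicit kernels and the cokernels of \Cref{construction:cokernels_in_freyd}, but as written your plan omits one of the two axioms. A route that sidesteps this verification entirely is the one underlying \cite{FreydRep} and \cite{BelFredCats}: under the equivalence of \Cref{theorem:freyd_as_fp_functors}, weak kernels in $\AC$ imply that $\fpf(\AC^{\op},\Ab)$ is closed under kernels and cokernels computed in the abelian functor category $\Hom(\AC^{\op},\Ab)$, and a full additive subcategory closed under ambient kernels and cokernels is automatically abelian.
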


Let $\AC$ be a small\footnote{A category is small if its object class is a set.} additive category.
A covariant functor $\AC \xrightarrow{F} \Ab$ is called \textbf{finitely presented}
if it arises as the cokernel of a natural transformation between representable functors,
\ie, if there exists a morphism $\alpha: a \rightarrow b$ in $\AC$ such that
\[
      F \simeq \cokernel \left[ \Hom( \alpha, - ): \Hom(b,-) \rightarrow \Hom(a,-) \right] \, .
\]
Finitely presented functors form a full subcategory $\fpf( \AC, \Ab )$ of the category of all additive functors $\Hom( \AC, \Ab )$
from $\AC$ to $\Ab$
with natural transformations as morphisms.
Dually, we obtain a category $\fpf( \AC^{\op}, \Ab )$ consisting of all contravariant finitely presented functors.

\begin{theorem}\label{theorem:freyd_as_fp_functors}
      The operation
      \begin{align*}
            \Freyd( \AC ) &\longrightarrow \fpf( \AC^{\op}, \Ab ) \\
            (a \xleftarrow{\rho_a} r_a) &\longmapsto \cokernel( \Hom_{\AC}( -, \rho_a ) )
      \end{align*}
      gives rise to an equivalence of categories.
\end{theorem}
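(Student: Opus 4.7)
The plan is to construct the functor via the contravariant Yoneda embedding, extend it via the universal property of Freyd categories, and then verify essential surjectivity and full faithfulness directly.

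First, I would observe that Yoneda provides an additive functor $y: \AC \to \fpf(\AC^{\op}, \Ab)$ sending $a \mapsto \Hom_\AC(-, a)$, since each representable is trivially finitely presented (being the cokernel of $0 \to \Hom_\AC(-, a)$). As $\fpf(\AC^{\op}, \Ab)$ is closed under cokernels inside $\Hom(\AC^{\op}, \Ab)$, it admits cokernels, so the universal property of the Freyd category (equipping $\AC$ with cokernels in a universal way) extends $y$ along the embedding $\emb$ from \Cref{remark:implicit_emb} to a right exact functor
\begin{equation*}
    \Phi: \Freyd(\AC) \longrightarrow \fpf(\AC^{\op}, \Ab).
\end{equation*}
Comparing \Cref{construction:cokernels_in_freyd} with the formation of cokernels in $\fpf(\AC^{\op}, \Ab)$, one sees that $\Phi$ agrees with the prescription $(a \xleftarrow{\rho_a} r_a) \mapsto \cokernel(\Hom_\AC(-, \rho_a))$ on objects.

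Essential surjectivity is then immediate from the definition of finitely presented: an arbitrary $F \in \fpf(\AC^{\op}, \Ab)$ arises as the cokernel of a natural transformation between representables, which Yoneda converts into some morphism $\rho: r \to a$ in $\AC$, whence $F \cong \Phi(a \xleftarrow{\rho} r)$.

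The substantive step is full faithfulness. Fix $A = (a \xleftarrow{\rho_a} r_a)$ and $B = (b \xleftarrow{\rho_b} r_b)$. The key observation is that each representable $\Hom_\AC(-, x)$ is projective in $\fpf(\AC^{\op}, \Ab)$: the functor $\Hom(\Hom_\AC(-, x), -)$ is evaluation at $x$ by Yoneda, and evaluation is exact. Given a natural transformation $\eta: \Phi(A) \to \Phi(B)$, projectivity of $\Hom_\AC(-, a)$ lets me lift $\eta$ against the cokernel projection $\Hom_\AC(-, b) \twoheadrightarrow \Phi(B)$ to a natural transformation $\Hom_\AC(-, a) \to \Hom_\AC(-, b)$, which Yoneda converts into a morphism $\alpha: a \to b$ in $\AC$. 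Since the composite $\Hom_\AC(-, \rho_a) \cdot \Hom_\AC(-, \alpha)$ vanishes after projecting to $\Phi(B)$, projectivity of $\Hom_\AC(-, r_a)$ yields a factorization through $\Hom_\AC(-, \rho_b)$, which Yoneda promotes to a witness $\omega: r_a \to r_b$ with $\rho_a \cdot \alpha = \omega \cdot \rho_b$. Thus $\{\alpha, \omega\}$ is a morphism in $\Freyd(\AC)$ sent to $\eta$ under $\Phi$. For injectivity, two data $\alpha, \alpha'$ inducing the same $\eta$ differ by a natural transformation $\Hom_\AC(-, a) \to \Hom_\AC(-, b)$ that becomes zero in $\Phi(B)$; projectivity produces a factorization through $\Hom_\AC(-, \rho_b)$, hence a $\lambda: a \to r_b$ with $\alpha - \alpha' = \lambda \cdot \rho_b$, which is precisely a witness for equality of $\{\alpha\}$ and $\{\alpha'\}$ in $\Freyd(\AC)$.

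The main obstacle I anticipate is bookkeeping: carefully tracking how the equivalence relation on morphism data in $\Freyd(\AC)$ corresponds, through repeated use of Yoneda and projectivity of representables, to the condition of becoming zero after passage to $\Phi(B)$. Once this dictionary is set up, both injectivity and surjectivity of the hom-map fall out simultaneously and the equivalence is established.
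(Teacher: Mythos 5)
Your proof is correct. The paper states this theorem without proof, treating it as a classical result of Freyd, so there is no in-paper argument to compare against; but your route --- extending the contravariant Yoneda embedding along $\emb$ via the universal property of $\Freyd( \AC )$, reading off essential surjectivity from the definition of finitely presented, and deducing full faithfulness from projectivity of representables (Yoneda plus exactness of pointwise evaluation) --- is exactly the standard argument, and every step checks out, including the two-stage lifting that produces the morphism witness $\omega$ and the identification of the equality relation on morphism data with vanishing in $\cokernel( \Hom_{\AC}( -, \rho_b ) )$. The only assertion left implicit is that $\fpf( \AC^{\op}, \Ab )$ is closed under cokernels in $\Hom( \AC^{\op}, \Ab )$, which is needed to invoke the universal property; this follows by the same projectivity argument (lift a natural transformation between cokernels of representables to a morphism of presentations), so it is a harmless omission.
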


\subsection{Multilinear functors}

Let $n \in \N$ and $\AC_1, \dots, \AC_n, \BC$ be additive categories.
\begin{definition}
      We call an $n$-ary functor
      \[
            F: \AC_1 \times \dots \times \AC_n \rightarrow \BC
      \]
      which is componentwise additive a \textbf{multilinear functor}.
      Moreover, if $\AC_1, \dots, \AC_n, \BC$ have cokernels,
      we say that $F$ is \textbf{right exact}
      if it is componentwise right exact, \ie,
      if for all $k \in \{1, \dots, n\}$
      the natural morphism
      \[
            \cokernel\left( F( a_1, \dots, a_{k-1}, \alpha_k, a_{k+1}, \dots, a_n ) \right)
            \longrightarrow
            F\left( a_1, \dots, a_{k-1}, \cokernel( \alpha_k ), a_{k+1}, \dots, a_n \right)
      \]
      is an isomorphism
      for all objects
      $a_i \in \AC_i$, $i \neq k$, and all morphisms $\alpha_k \in \AC_k$.
\end{definition}

\begin{notationnonumber}
      For any index set $J$, whenever we have a $J$-indexed family of elements $(a_j)_{j \in J}$,
      we use the abbreviation $a_J$.
      We set $\nindex := \{1, \dots n \}$ since we will often use this special index set.
      If we wish to evaluate $F$ at $a_1, \dots, a_n$ but with the $j$-th component substituted with the element $b$, we write
      \[
            F( a_{{\nindex}-j}; b ) := F( (a_i)_{i \in {\nindex}-j}; b ) := F( a_1, \dots, a_{j-1}, b, a_{j+1}, \dots, a_{n}) \, .
      \]
\end{notationnonumber}

Right exactness of a multilinear functor can be conveniently rephrased as follows.

\begin{lemma}\label{lemma:criterion_resp_cok}
      A multilinear functor $F: \AC_1 \times \dots \times \AC_n \rightarrow \BC$
      is right exact if and only if
      for any tuple of morphisms $( b_i \stackrel{\alpha_i}{\longrightarrow} a_i )_{i \in {\nindex}} \in \prod_{i \in {\nindex}} \AC_i$
      the sequence
      \begin{center}
            \begin{tikzpicture}[
                  baseline=(current bounding box.center),
                  label/.style={postaction={
            decorate,
            decoration={markings, mark=at position .5 with \node #1;}},
            mylabel/.style={thick, draw=none, align=center, minimum width=0.5cm, minimum height=0.5cm,fill=white}}]
            \coordinate (r) at (2.5,0);
            \coordinate (u) at (0,2);
            \node (D) {$0$};
            \node (C) at ($(D) + (r)$) {$F\big(  \cokernel \alpha_{{\nindex}} \big)$};
            \node (B) at ($(C)+(r)$) {$F\big( a_{{\nindex}} \big)$};
            \node (A) at ($(B)+2.5*(r)$) {$\bigoplus_{j \in {\nindex}} F\big( a_{{\nindex}-j};b_j \big)$};
            
            \draw[->,thick] (A) --node[above]{$\big( F( \id_{a_{{\nindex}-j}}; \alpha_j )  \big)_{j \in {\nindex}}$} (B);
            \draw[->,thick] (B) -- (C);
            \draw[->,thick] (C) -- (D);
            \end{tikzpicture}
        \end{center}
      is exact.
\end{lemma}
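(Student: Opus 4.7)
The plan is to interpret exactness of the displayed three-term sequence as the universal property that $F(\cokernel \alpha_\nindex)$, together with the canonical morphism from $F(a_\nindex)$, is a cokernel of the direct sum map $(F(\id_{a_{\nindex-j}}; \alpha_j))_{j \in \nindex}$, and to prove each direction separately.

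For the reverse implication, I would fix $k \in \nindex$ and a morphism $\alpha_k : b_k \to a_k$ in $\AC_k$, and instantiate the hypothesis at the tuple $(\alpha_i)_{i \in \nindex}$ in which $\alpha_i$ is the zero morphism $0 \to a_i$ for every $i \neq k$. Multilinearity yields $F(a_{\nindex-j}; 0) = 0$ for $j \neq k$, so the direct sum collapses to the single summand $F(a_{\nindex-k}; b_k)$; since $\cokernel(0 \to a_i) = a_i$ for $i \neq k$, the hypothesised exact sequence becomes precisely the statement that $F$ is right exact in its $k$-th slot.

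For the forward implication, I would argue along the filtration
\[
C_k := F(\cokernel \alpha_1, \ldots, \cokernel \alpha_k, a_{k+1}, \ldots, a_n), \qquad 0 \le k \le n,
\]
so that $C_0 = F(a_\nindex)$ and $C_n = F(\cokernel \alpha_\nindex)$. Componentwise right exactness of $F$ in its $k$-th variable, with the other slots held fixed, exhibits each projection $\pi_k : C_{k-1} \twoheadrightarrow C_k$ as a cokernel of the canonical morphism
\[
p_k : F(\cokernel \alpha_1, \ldots, \cokernel \alpha_{k-1}, b_k, a_{k+1}, \ldots, a_n) \longrightarrow C_{k-1}.
\]
Writing $\pi_{<k} := \pi_{k-1} \circ \cdots \circ \pi_1$ for the iterated projection, the composite $\pi_{<n+1} : C_0 \twoheadrightarrow C_n$ is the desired epimorphism $F(a_\nindex) \twoheadrightarrow F(\cokernel \alpha_\nindex)$. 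To verify its universal property, I would take any $h : C_0 \to W$ with $h \circ F(\id_{a_{\nindex-j}}; \alpha_j) = 0$ for all $j \in \nindex$ and inductively produce factorizations $h_k : C_k \to W$ satisfying $h = h_k \circ \pi_{<k+1}$; then $h_n$ is the required factorization, and its uniqueness is forced by the epimorphy of $\pi_{<n+1}$.

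The main technical obstacle is propagating the vanishing condition along the induction: to obtain $h_k$ as the unique colift of $h_{k-1}$ through $\pi_k$, one needs $h_{k-1} \circ p_k = 0$, which is not directly hypothesised. The crucial observation is that the iterated projections in the earlier variables furnish an epimorphism
\[
q_{k-1} : F(a_{\nindex-k}; b_k) \twoheadrightarrow F(\cokernel \alpha_1, \ldots, \cokernel \alpha_{k-1}, b_k, a_{k+1}, \ldots, a_n),
\]
and the separate functoriality of $F$ in each slot then yields the identity $p_k \circ q_{k-1} = \pi_{<k} \circ F(\id_{a_{\nindex-k}}; \alpha_k)$. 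Since $h_{k-1} \circ \pi_{<k} = h$ by construction and $h \circ F(\id_{a_{\nindex-k}}; \alpha_k) = 0$ by hypothesis, cancelling the epimorphism $q_{k-1}$ gives $h_{k-1} \circ p_k = 0$, closing the induction.
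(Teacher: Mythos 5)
Your proof is correct, and the forward direction takes a genuinely different route from the paper's. The paper proves that right exactness implies exactness of the displayed sequence by induction on the arity $n$: it fixes the last slot, applies the induction hypothesis to the resulting $n$-ary functor to obtain an exact top row, uses right exactness in the last variable to obtain exact columns, and concludes by a diagram chase. You avoid both the induction on arity and the chase: you factor the canonical map $F(a_{\nindex}) \rightarrow F(\cokernel\alpha_{\nindex})$ through the filtration $C_0 \twoheadrightarrow C_1 \twoheadrightarrow \cdots \twoheadrightarrow C_n$ of single-slot cokernel projections and verify the cokernel universal property of the composite directly, propagating the vanishing condition via the interchange identity $p_k \circ q_{k-1} = \pi_{<k} \circ F(\id_{a_{\nindex-k}};\alpha_k)$ and cancellation of the epimorphism $q_{k-1}$. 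This buys some rigor in the present setting: $\BC$ is only assumed additive with cokernels, so the paper's ``diagram chase'' must in any case be unwound into universal-property arguments, and your filtration does exactly that; the price is a slightly longer bookkeeping of the colifts $h_k$. One small point to add: the same interchange identity also yields $\pi_{<n+1} \circ F(\id_{a_{\nindex-j}};\alpha_j) = \pi_n \circ \cdots \circ \pi_j \circ p_j \circ q_{j-1} = 0$ for every $j$, which is needed before $C_n$ can be proposed as a cokernel at all; you only verify the factorization property. Your backward direction, evaluating at tuples in which all but one morphism is $0 \rightarrow a_i$, coincides with the paper's.
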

\begin{proof}
      If we know that the sequence is exact for all tuples of morphisms,
      then evaluating at the special tuples
      \[
      ( 0 \rightarrow a_1, \dots, 0 \rightarrow a_{k-1}, b_{k} \xrightarrow{\alpha_k} a_{k}, 0 \rightarrow a_{k+1}, \dots 0 \rightarrow a_{n})
      \]
      proves that $F$ is right exact.
      
      For the converse, we proceed by induction on $n$. The case $n = 1 $ is trivial.
      For the induction step, let $F$ be an $(n+1)$-ary multilinear right exact functor.
      We set $A_i := \cokernel( \alpha_i )$
      for $i = 1, \dots, n+1$.
      Moreover, we write $F(-|x)$
      for the $n$-ary right exact functor obtained
      by fixing the last component of $F$ to a given object $x \in \AC_{n+1}$.
      Now, the claim follows by a diagram chase using
      the following diagram
      whose top row is exact by induction hypothesis
      and whose columns are exact since $F$ is right exact:
      \begin{center}
            \begin{tikzpicture}[label/.style={postaction={
                  decorate,
                  decoration={markings, mark=at position .5 with \node #1;}},
                  mylabel/.style={thick, draw=none, align=center, minimum width=0.5cm, minimum height=0.5cm,fill=white}}]
                  \coordinate (r) at (4,0);
                  \coordinate (rs) at (2.5,0);
                  \coordinate (d) at (0,-2);
                  \coordinate (ds) at (0,-1.5);
                  \node (A1) {};
                  \node (A2) at ($(A1) + (rs)$) {};
                  \node (A3) at ($(A2) + (r)$) {$0$};
                  \node (A4) at ($(A3) + (r)$) {$0$};
                  
                  \node (B1) at ($(A1) + (ds)$) {$0$};
                  \node (B2) at ($(B1) + (rs)$) {$F_{}( A_{{\nindex}} | A_{n+1})$};
                  \node (B3) at ($(B2) + (r)$) {$F( a_{{\nindex}} | A_{n+1} )$};
                  \node (B4) at ($(B3) + (r)$) {$\oplus_{j \in \nindex} F(a_{\nindex -j}; r_{a_j} | A_{n+1})$};
                  
                  \node (C1) at ($(B1) + (d)$) {};
                  \node (C2) at ($(C1) + (rs)$) {};
                  \node (C3) at ($(C2) + (r)$) {$F(a_{\nindex}|a_{n+1})$};
                  \node (C4) at ($(C3) + (r)$) {$\oplus_{j \in \nindex} F(a_{\nindex -j};r_{a_j}|a_{n+1})$};
                  
                  \node (D1) at ($(C1) + (d)$) {};
                  \node (D2) at ($(D1) + (rs)$) {};
                  \node (D3) at ($(D2) + (r)$) {$F(a_{\nindex}|r_{a_{n+1}})$};
                  \node (D4) at ($(D3) + (r)$) {$\oplus_{j \in \nindex} F(a_{\nindex -j};r_{a_j}|r_{a_{n+1}})$};
                  
                  \draw[->,thick] (B2) -- (B1);
                  \draw[->,thick] (B3) -- (B2);
                  \draw[->,thick] (B4) -- (B3);
                  
                  \draw[->,thick] (C4) -- (C3);
                  
                  \draw[->,thick] (D4) -- (D3);
                  
                  \draw[->,thick] (D4) -- (C4);
                  \draw[->,thick] (C4) -- (B4);
                  \draw[->,thick] (B4) -- (A4);

                  \draw[->,thick] (D3) -- (C3);
                  \draw[->,thick] (C3) -- (B3);
                  \draw[->,thick] (B3) -- (A3);
                  \end{tikzpicture} \qedhere
      \end{center}
\end{proof}

\subsection{The multilinear \texorpdfstring{$2$}{2}-categorical universal property of Freyd categories}\label{subsection:universal_prop_freyd}

Let $n \in \N$ and $\AC_1, \dots, \AC_n$ be additive categories.
Moreover, let $\BC$ be an additive category with cokernels.
We denote by $\Homlin( (\AC_i)_{i \in {\nindex}}, \BC)$
the category whose objects are multilinear functors of the form $\prod_{i \in {\nindex}} \AC_i \xrightarrow{F} \BC$
and whose morphisms are given by natural transformations.
Moreover, we denote by $\Homlincok( (\AC_i)_{i \in {\nindex}}, \BC)$
the category whose objects are multilinear functors $F$ as above
that are right exact
and whose morphisms are given by natural transformations.

\begin{theorem}[Multilinear $2$-categorical universal property of Freyd categories]\label{theorem:2_cat_universal_freyd_mult}
There is an equivalence of categories
\[ \Homlin ( (\AC_i)_{i \in {\nindex}}, \BC ) \simeq \Homlincok\left( (\Freyd( \AC_i ) )_{i \in {\nindex}}, \BC \right) \, . \]
\end{theorem}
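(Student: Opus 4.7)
The plan is to exhibit explicit functors in both directions and show they are mutually quasi-inverse. In one direction, precomposition with the embeddings $\emb_i: \AC_i \to \Freyd(\AC_i)$ sends a right exact multilinear $G: \prod \Freyd(\AC_i) \to \BC$ to $R(G) := G \circ (\emb_1 \times \cdots \times \emb_n)$, clearly a multilinear functor into $\BC$. Going the other way requires more work: given a multilinear $F: \prod \AC_i \to \BC$, I would define $E(F) = \tilde F$ on objects $A_i = (a_i \xleftarrow{\rho_i} r_i) \in \Freyd(\AC_i)$ via the cokernel
\[
      \tilde F(A_\nindex) := \cokernel\!\left( \bigoplus_{j \in \nindex} F(a_{\nindex-j}; r_j) \xrightarrow{(F(\id_{a_{\nindex-j}}; \rho_j))_j} F(a_\nindex) \right),
\]
which is forced upon us by \Cref{lemma:criterion_resp_cok} once we demand $\tilde F$ be right exact and restrict to $F$ on embedded objects.

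The main technical task is to extend $\tilde F$ to morphisms and check that it is well-defined. Given morphisms $\{\alpha_i, \omega_i\}: A_i \to B_i$, I would apply $F$ componentwise to the data $\alpha_\nindex$ and invoke the universal property of the defining cokernel of $\tilde F(B_\nindex)$. To see the required composite vanishes, I use the witness identity $\rho^a_j \cdot \alpha_j = \omega_j \cdot \rho^b_j$ to rewrite
\[
      F(\id; \rho^a_j) \cdot F(\alpha_\nindex) = F(\alpha_{\nindex-j}; \omega_j) \cdot F(\id; \rho^b_j),
\]
and this factors through the $j$-th summand of the domain of $\tilde F(B_\nindex)$'s cokernel presentation, so the composite to $\tilde F(B_\nindex)$ is zero. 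Independence from the choice of $\omega_i$ is automatic since $\omega_i$ enters only in verifying the vanishing condition, not in constructing the induced arrow. Independence from the representative of the morphism class reduces, by telescoping one component at a time, to showing that replacing $\alpha_j$ by $\alpha_j + \lambda_j \cdot \rho^b_j$ changes $F(\alpha_\nindex)$ by a term that factors as $F(\alpha_{\nindex-j}; \lambda_j) \cdot F(\id; \rho^b_j)$, which again dies after passing to the target cokernel. Functoriality is then inherited componentwise from $F$, multilinearity is immediate from that of $F$ together with additivity of cokernels, and right exactness of $\tilde F$ holds by construction via \Cref{lemma:criterion_resp_cok}. Extension to natural transformations $F \Rightarrow F'$ is analogous: componentwise application followed by the universal property.

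For the two natural isomorphisms, $R \circ E \simeq \Id$ is essentially immediate because $\tilde F(\emb(a_\nindex))$ is the cokernel of the zero map $\bigoplus_j F(a_{\nindex-j}; 0) \to F(a_\nindex)$, which is canonically $F(a_\nindex)$. The other isomorphism $E \circ R \simeq \Id$ is where the right exactness hypothesis on $G$ is used: since every $A_i = (a_i \xleftarrow{\rho_i} r_i)$ equals $\cokernel(\emb(\rho_i))$ in $\Freyd(\AC_i)$ by \Cref{construction:cokernels_in_freyd}, applying \Cref{lemma:criterion_resp_cok} to the right exact multilinear $G$ expresses $G(A_\nindex)$ as exactly the same cokernel formula that defines $E(R(G))(A_\nindex)$, yielding a canonical natural isomorphism.

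The main obstacle I anticipate is the bookkeeping around well-definedness of $\tilde F$ on morphisms: one has to carefully juggle multiple witness data $(\omega_i)$ and multiple equivalence witnesses $(\lambda_i)$ simultaneously across $n$ components, and it is crucial that the argument be componentwise so that it telescopes cleanly. Everything else is either formal (functoriality, naturality of the isomorphisms) or a direct application of \Cref{lemma:criterion_resp_cok}, which has already absorbed the content of the multilinear right-exactness bookkeeping.
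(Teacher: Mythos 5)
Your proposal is correct and follows essentially the same route as the paper: the same cokernel formula for the extension $\widehat{F}$, the induced map on cokernels for morphisms, \Cref{lemma:criterion_resp_cok} to establish right exactness and to prove $G \simeq \widehat{G|_{\AC_{\nindex}}}$, and the degenerate cokernel over $\bigoplus_j F(a_{\nindex-j};0) \simeq 0$ for the other natural isomorphism. If anything you are slightly more explicit than the paper about independence of the induced morphism from the chosen representative $\alpha_j$ of a morphism class (not just from the witness $\omega_j$), which is a point worth spelling out.
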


The goal of this Subsection is to prove this universal property
by providing explicit constructions that define the stated equivalence of categories.

\begin{construction}\label{construction:extension_of_multi_functor}
      Suppose given a multilinear functor
      $F: \prod_{i \in {\nindex}} \AC_i \rightarrow \BC$,
      we can construct a functor
      \[
            \widehat{F}: \prod_{i \in {\nindex}} \Freyd( \AC_i ) \longrightarrow \BC
      \]
      by setting
      \[ 
            \widehat{F} \big( A_{{\nindex}} \big) := 
            \mathrm{cok} \left( F(a_{{\nindex}}) \xleftarrow{\left( F( \id_{a_{{\nindex}-j}};\rho_{a_j})\right)_{j \in {\nindex}}} \bigoplus_{j \in {\nindex}} F( a_{{\nindex}-j}; r_{a_j} ) \right)
      \]
      for objects $A_{{\nindex}} \in \prod_{i \in {\nindex}} \Freyd( \AC_i )$.
      
      For morphisms $( A_i \xrightarrow{\{\alpha_i, \omega_i\}} B_i)_{i \in {\nindex}}$ we define the action of $\widehat{F}$
      via the following commutative diagram with exact rows:
\begin{center}
\begin{tikzpicture}[baseline=(current bounding box.center),label/.style={postaction={
          decorate,
          decoration={markings, mark=at position .5 with \node #1;}},
          mylabel/.style={thick, draw=none, align=center, minimum width=0.5cm, minimum height=0.5cm,fill=white}}]

          \def\w{3.5};
          \def\h{3};

          \node (A) at (-2*\w,0) {$0$};
          \node (B) at (-\w,0) {$\widehat{F} \big( A_{{\nindex}} \big)$};
          \node (C) at (0,0) {$F(a_{{\nindex}})$};
          \node (D) at (2*\w,0) {$\bigoplus_{j \in {\nindex}} F( a_{{\nindex}-j}; r_{a_j} )$};
          \node (E) at (-2*\w,-\h) {$0$};
          \node (F) at (-\w,-\h) {$\widehat{F} \big( B_{{\nindex}} \big)$};
          \node (G) at (0,-\h) {$F(b_{{\nindex}})$};
          \node (H) at (2*\w,-\h) {$\bigoplus_{j \in {\nindex}} F( b_{{\nindex}-j}; r_{b_j} )$};

          \draw[->,thick] (B) --node[below] {} (A);
          \draw[->,thick] (C) --node[below] {} (B);
          \draw[->,thick] (D) --node[above] {$\left(F( \id_{a_{{\nindex}-j}};\rho_{a_j})\right)_{j \in {\nindex}}$} (C);
          \draw[->,thick] (F) --node[below] {} (E);
          \draw[->,thick] (G) --node[below] {} (F);
          \draw[->,thick] (H) --node[below] {$\left(F( \id_{b_{{\nindex}-j}};\rho_{b_j})\right)_{j \in {\nindex}}$} (G);

          \draw[->,thick] (C) --node[right] {$F\left( {\alpha_{\nindex}} \right)$} (G);
          \draw[->,thick, label={[mylabel]{{$\left(F( {\alpha_{{\nindex}-j}};\omega_j)\right)_{j \in {\nindex}}$}}  }] (D) -- (H);

          \draw[->,thick] (B) --node[left] {$\widehat{F}( \{ \alpha_{\nindex} \} )$} (F);

          \node at (1*\w,-0.5*\h) {$\circlearrowleft$};
          \node at (-0.5*\w,-0.5*\h) {$\circlearrowleft$};

\end{tikzpicture}
\end{center}
\end{construction}
\begin{proof}[Correctness of the construction]
      The diagram defining the action of $\widehat{F}$ on morphisms commutes
      by the functoriality of $F$ and by the equation $\rho_{a_j} \cdot \alpha_j = \omega_j \cdot \rho_{b_j}$.
      Moreover, $\widehat{F}$ defines a functor since taking cokernels of commutative squares is a functorial operation
      whose output does not depend on the morphism witnesses $\omega_i$.
\end{proof}

\begin{lemma}\label{lemma:extension_commutes_with_cokernels_mult}
      The functor $\widehat{F}$ described in \Cref{construction:extension_of_multi_functor}
      is multilinear and right exact.
\end{lemma}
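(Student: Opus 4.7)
The plan is to verify the two claimed properties of $\widehat{F}$ separately: componentwise additivity (on both objects and morphisms) and componentwise right exactness. Both rest on two elementary facts: cokernels are colimits, so they commute with direct sums and with further cokernels; and $F$ is componentwise additive. Note that right exactness of $F$ itself is neither available nor needed—the right exactness of $\widehat{F}$ is a byproduct of defining it via a cokernel.

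For componentwise additivity, I would fix all arguments except the $k$-th. In $\Freyd(\AC_k)$ the zero object is $(0 \leftarrow 0)$ and direct sums are computed componentwise, $(a \leftarrow r) \oplus (a' \leftarrow r') = (a \oplus a' \leftarrow r \oplus r')$. Substituting into the defining cokernel formula of \Cref{construction:extension_of_multi_functor} and using multilinearity of $F$ to distribute over the direct sum in the $k$-th slot, together with the fact that taking cokernels commutes with direct sums, one obtains $\widehat{F}(A_{\nindex-k}; 0) \cong 0$ and $\widehat{F}(A_{\nindex-k}; A_k \oplus A'_k) \cong \widehat{F}(A_{\nindex-k}; A_k) \oplus \widehat{F}(A_{\nindex-k}; A'_k)$, naturally in all arguments. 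Additivity of $\widehat{F}$ on morphisms follows similarly: the horizontal arrows $F(\alpha_{\nindex})$ and $F(\alpha_{\nindex-j}; \omega_j)$ in the defining diagram are additive in each slot (with witnesses added accordingly, since $\omega + \omega'$ is a valid witness for $\alpha + \alpha'$), so the universal property of cokernels propagates additivity to the induced map $\widehat{F}(\{\alpha_{\nindex}\})$.

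For right exactness, I would again fix all slots except the $k$-th and show that $\widehat{F}(A_{\nindex-k}; -)$ preserves cokernels. Given $\{\alpha_k\}\colon A_k \to A'_k$ in $\Freyd(\AC_k)$, \Cref{construction:cokernels_in_freyd} describes $\cokernel\{\alpha_k\} = (a'_k \xleftarrow{\pmatcol{\rho_{a'_k}}{\alpha_k}} r_{a'_k} \oplus a_k)$. Plugging this into the defining formula of $\widehat{F}$ and using multilinearity of $F$ to split $F(a_{\nindex-k}; r_{a'_k} \oplus a_k)$ presents $\widehat{F}(A_{\nindex-k}; \cokernel\{\alpha_k\})$ as the cokernel of a single map $\pi$ whose components are $F(\id; \rho_{a_j})$ for $j \neq k$ (with $a'_k$ in the $k$-th slot), $F(\id; \rho_{a'_k})$, and $F(\id; \alpha_k)$. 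On the other hand, $\cokernel(\widehat{F}(A_{\nindex-k}; \{\alpha_k\}))$ can be computed by stacking two cokernels: the cokernel defining $\widehat{F}(A_{\nindex-k}; A'_k)$ followed by a further collapse along the image of $F(\id; \alpha_k)$. By associativity of iterated cokernel formation, the result is again the cokernel of $\pi$, and the comparison morphism is natural, giving the desired isomorphism.

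The conceptual content of the proof is light; the real obstacle is bookkeeping. One must track which slot each identity morphism occupies, verify that the structure maps line up after distributing the direct sum in the $k$-th slot through $F$, and check that morphism witnesses (which do not appear in the final cokernel object) transform compatibly. Once this bookkeeping is in place, the lemma follows immediately.
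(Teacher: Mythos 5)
Your proof is correct, but the right-exactness half takes a genuinely different route from the paper. The paper does not verify componentwise preservation of cokernels slot by slot; instead it invokes the (easy direction of the) criterion of \Cref{lemma:criterion_resp_cok} and checks, for an \emph{arbitrary tuple} of morphisms $(A_i\xrightarrow{\{\alpha_i\}}B_i)_{i\in\nindex}$ changed in all slots simultaneously, that the sequence $0\leftarrow\widehat{F}(C_{\nindex})\leftarrow\widehat{F}(B_{\nindex})\leftarrow\bigoplus_j\widehat{F}(B_{\nindex-j};A_j)$ is exact, by a diagram chase on a commutative diagram with exact columns built from the defining presentations. You instead fix one slot $k$, substitute the explicit cokernel object of \Cref{construction:cokernels_in_freyd} into the defining formula of $\widehat{F}$, and identify both $\widehat{F}(A_{\nindex-k};\cokernel\{\alpha_k\})$ and $\cokernel(\widehat{F}(A_{\nindex-k};\{\alpha_k\}))$ as the cokernel of one and the same morphism $\pi$ out of $F(a_{\nindex-k};a'_k)$. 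This is more elementary and self-contained (no appeal to \Cref{lemma:criterion_resp_cok}, hence no induction and no chase), and it is more explicit, which fits the constructive spirit of the paper; the paper's version buys the stronger simultaneous exact sequence in one stroke and reuses the machinery it has already set up. Two small points you should make explicit: first, since $\BC$ is only assumed to have cokernels (it need not be abelian), the ``further collapse along the image'' and the ``associativity of iterated cokernels'' must be phrased without images, e.g.\ via the identities $\cokernel(\bar{f})=\cokernel(\bar{f}\circ e)$ for $e$ an epimorphism and $\cokernel(p\circ f)\cong\cokernel\bigl(\begin{smallmatrix}u\\ f\end{smallmatrix}\bigr)$ for $p=\cokernel(u)$, both of which follow by comparing the functors they corepresent; second, the definition of right exactness demands that the \emph{canonical} comparison morphism be an isomorphism, so one should note that your identification of both sides with $\cokernel(\pi)$ is compatible with the projections from $F(a_{\nindex-k};a'_k)$, which it is because $\widehat{F}$ applied to the cokernel projection $A'_k\to\cokernel\{\alpha_k\}$ has morphism datum the identity. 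Neither point is a gap, only bookkeeping to be recorded.
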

\begin{proof}
      The multilinearity of $\widehat{F}$ follows from the multilinearity of $F$.
      In order to prove right exactness, we check the criterion stated in \Cref{lemma:criterion_resp_cok}.
      Let $(A_i \xrightarrow{\{\alpha_i\}} B_i)_{i \in \nindex}$ be a tuple of morphisms in $\prod_{i \in \nindex }\Freyd( \AC_i )$.
      By \Cref{construction:cokernels_in_freyd}, the cokernel of each $\{\alpha_i\}$ is given by
      \[C_i := (b_i \xleftarrow{\pmatcol{\rho_{b_i}}{\alpha_i}} r_{b_i} \oplus a_i) \, . \]
      Now, a diagram chase proves that the top row of the following commutative diagram with exact columns
      is exact, which yields the claim.
      \begin{center}
            \begin{tikzpicture}[label/.style={postaction={
                  decorate,
                  decoration={markings, mark=at position .5 with \node #1;}},
                  mylabel/.style={thick, draw=none, align=center, minimum width=0.5cm, minimum height=0.5cm,fill=white}}]
                  \coordinate (r) at (5.0,0);
                  \coordinate (rs) at (2.5,0);
                  \coordinate (d) at (0,-2);
                  \coordinate (ds) at (0,-1.5);
                  \node (A1) {};
                  \node (A2) at ($(A1) + (rs)$) {$0$};
                  \node (A3) at ($(A2) + (r)$) {$0$};
                  \node (A4) at ($(A3) + (r)$) {$0$};
                  
                  \node (B1) at ($(A1) + (ds)$) {$0$};
                  \node (B2) at ($(B1) + (rs)$) {$\widehat{F}( C_{\nindex} )$};
                  \node (B3) at ($(B2) + (r)$) {$\widehat{F}( B_{\nindex} ) $};
                  \node (B4) at ($(B3) + (r)$) {$\oplus_{j \in \nindex} \widehat{F}(B_{\nindex -j}; A_{j} )$};
                  
                  \node (C1) at ($(B1) + (d)$) {};
                  \node (C2) at ($(C1) + (rs)$) {$F(b_{\nindex})$};
                  \node (C3) at ($(C2) + (r)$) {$F(b_{\nindex})$};
                  \node (C4) at ($(C3) + (r)$) {$\oplus_{j \in \nindex} F(b_{\nindex -j}; a_{j})$};
                  
                  \node (D1) at ($(C1) + (d)$) {};
                  \node (D2) at ($(D1) + (rs)$) {$\oplus_{j \in \nindex} \left(F(b_{\nindex-j}; r_{b_{j}}) \oplus F(b_{\nindex -j}; a_{j})\right)$};
                  \node (D3) at ($(D2) + (r)$) {$\oplus_{j \in \nindex}F(b_{\nindex-j}; r_{b_{j}})$};
                  
                  \draw[->,thick] (D2) -- (C2);
                  \draw[->,thick] (C2) -- (B2);
                  \draw[->,thick] (B2) -- (A2);
                  \draw[->,thick] (D3) -- (D2);
                  \draw[->,thick] (C3) -- (C2);

                  \draw[->,thick] (B2) -- (B1);
                  \draw[->,thick] (B3) -- (B2);
                  \draw[->,thick] (B4) -- (B3);
                  
                  \draw[->,thick] (C4) -- (C3);
                  
                  \draw[->,thick] (C4) -- (B4);
                  \draw[->,thick] (B4) -- (A4);

                  \draw[->,thick] (D3) -- (C3);
                  \draw[->,thick] (C3) -- (B3);
                  \draw[->,thick] (B3) -- (A3);
                  \end{tikzpicture} \qedhere
      \end{center}
\end{proof}

\begin{construction}\label{construction:extension_of_multi_nat}
      Let $F,G: \prod_{i\in {\nindex}}\AC_i \longrightarrow \BC$ be multilinear functors and let
      \[
            \nu: F \longrightarrow G
      \]
      be a natural transformation.
      We construct the components of a natural transformation 
      \begin{center}
            \begin{tikzpicture}[baseline=(current bounding box.center),label/.style={postaction={
                      decorate,
                      decoration={markings, mark=at position .5 with \node #1;}},
                      mylabel/.style={thick, draw=black, align=center, minimum width=0.5cm, minimum height=0.5cm,fill=white}}]
            
                      \def\w{4.5};
                      \def\h{0.7};
            
                      \node (A) at (0,0) {$\prod_{i\in {\nindex}}\Freyd( \AC_i )$};
                      \node (B) at (\w,0) {$\BC$};
                     
                      \node (D) at (0.5*\w,0) [rotate = -90] {$\Rightarrow$};
                      \node (E) at (0.6*\w,0) {$\widehat{\nu}$};
                      
                      \draw[bend left,->,thick,label={[above]{$\widehat{F}$}},out=360+25,in=360+155] (A) to (B);
                      \draw[bend right,->,thick,label={[below]{$\widehat{G}$}},out=360-25,in=360-155] (A) to (B);
                     
            \end{tikzpicture}
            \end{center}
      between $\widehat{F}$ and $\widehat{G}$ as the unique morphism which turns the following diagram into a commutative diagram with exact rows:
      \begin{center}
            \begin{tikzpicture}[baseline=(current bounding box.center),label/.style={postaction={
                      decorate,
                      decoration={markings, mark=at position .5 with \node #1;}},
                      mylabel/.style={thick, draw=none, align=center, minimum width=0.5cm, minimum height=0.5cm,fill=white}}]
            
                      \def\w{3.5};
                      \def\h{3};
            
                      \node (A) at (-2*\w,0) {$0$};
                      \node (B) at (-\w,0) {$\widehat{F} \big( A_{{\nindex}} \big)$};
                      \node (C) at (0,0) {$F(a_{{\nindex}})$};
                      \node (D) at (2*\w,0) {$\bigoplus_{j \in {\nindex}} F( a_{{\nindex}-j}; r_{a_j} )$};
                      \node (E) at (-2*\w,-\h) {$0$};
                      \node (F) at (-\w,-\h) {$\widehat{G} \big( A_{{\nindex}} \big)$};
                      \node (G) at (0,-\h) {$G(a_{{\nindex}})$};
                      \node (H) at (2*\w,-\h) {$\bigoplus_{j \in {\nindex}} G( a_{{\nindex}-j}; r_{a_j} )$};
            
                      \draw[->,thick] (B) --node[below] {} (A);
                      \draw[->,thick] (C) --node[below] {} (B);
                      \draw[->,thick] (D) --node[above] {$\left(F( \id_{a_{{\nindex}-j}};\rho_{a_j})\right)_{j \in {\nindex}}$} (C);
                      \draw[->,thick] (F) --node[below] {} (E);
                      \draw[->,thick] (G) --node[below] {} (F);
                      \draw[->,thick] (H) --node[below] {$\left(G( \id_{a_{{\nindex}-j}};\rho_{a_j})\right)_{j \in {\nindex}}$} (G);
            
                      \draw[->,thick] (C) --node[right] {$\nu_{a_{{\nindex}}}$} (G);
                      \draw[->,thick, label={[mylabel]{{$\left(\nu_{(a_{{\nindex}-j}; r_{a_j} )}  \right)_{j \in {\nindex}}$}}  }] (D) -- (H);
            
                      \draw[->,thick] (B) --node[left] {$\widehat{\nu}_{A_{{\nindex}}}$} (F);
            
                      \node at (1*\w,-0.5*\h) {$\circlearrowleft$};
                      \node at (-0.5*\w,-0.5*\h) {$\circlearrowleft$};
            
            \end{tikzpicture}
            \end{center}
\end{construction}
\begin{proof}[Correctness of the construction]
      The rectangle on the right hand side of the diagram defining $\widehat{\nu}$ commutes since $\nu$ is a natural transformation.
      To show that the above data define a natural transformation, we have to verify that the following diagram commutes
      for all $(A_i \xrightarrow{\{\alpha_i\}} B_i)_{i \in {\nindex}}$:
      \begin{center}
      \begin{tikzpicture}[baseline=(current bounding box.center),label/.style={postaction={
                decorate,
                decoration={markings, mark=at position .5 with \node #1;}},
                mylabel/.style={thick, draw=black, align=center, minimum width=0.5cm, minimum height=0.5cm,fill=white}}]
      
                \def\w{4};
                \def\h{2.5};
      
                \node (A) at (0,0) {$\widehat{F} \big( A_{{\nindex}} \big)$};
                \node (B) at (\w,0) {$\widehat{G} \big( A_{{\nindex}} \big)$};
                \node (C) at (0,-\h) {$\widehat{F} \big( B_{{\nindex}} \big)$};
                \node (D) at (\w,-\h) {$\widehat{G} \big( B_{{\nindex}} \big)$};
      
                \draw[->,thick] (A) --node[below]{$\widehat{\nu}_{A_{{\nindex}}}$} (B);
                \draw[->,thick] (C) --node[above]{$\widehat{\nu}_{B_{{\nindex}}}$} (D);
                \draw[->,thick] (A) --node[left]{$\widehat{F}( \{\alpha_{{\nindex}}\} )$} (C);
                \draw[->,thick] (B) --node[right]{$\widehat{G}( \{\alpha_{{\nindex}}\} )$} (D);
      
      \end{tikzpicture}
      \end{center}
      But this diagram commutes because it fits into a diagram of the form
      \begin{center}
            \begin{tikzpicture}[baseline=(current bounding box.center),label/.style={postaction={
                      decorate,
                      decoration={markings, mark=at position .5 with \node #1;}},
                      mylabel/.style={thick, draw=black, align=center, minimum width=0.5cm, minimum height=0.5cm,fill=white}}]
            
                      \def\w{12};
                      \def\h{4};
                      \coordinate (r) at (4,0);
                      \coordinate (d) at (0,-1);
                      \node (A) at (0,0) {$F(a_{{\nindex}})$};
                      \node (B) at (\w,0) {$G(a_{{\nindex}})$};
                      \node (C) at (0,-\h) {$F(b_{{\nindex}})$};
                      \node (D) at (\w,-\h) {$G(b_{{\nindex}})$};
                      
                      \node (A2) at ($(A) + (r) + (d)$) {$\widehat{F} \big( A_{{\nindex}} \big)$};
                      \node (B2) at ($(B) - (r) + (d)$) {$\widehat{G} \big( A_{{\nindex}} \big)$};
                      \node (C2) at ($(C) + (r) - (d)$) {$\widehat{F} \big( B_{{\nindex}} \big)$};
                      \node (D2) at ($(D) - (r) - (d)$) {$\widehat{G} \big( B_{{\nindex}} \big)$};

                      \draw[->,thick] (A) --node[above]{$\nu_{a_{{\nindex}}}$} (B);
                      \draw[->,thick] (C) --node[below]{$\nu_{b_{{\nindex}}}$} (D);
                      \draw[->,thick] (A) --node[left]{$F ({\alpha_{{\nindex}}})$} (C);
                      \draw[->,thick] (B) --node[right]{$G ({\alpha_{{\nindex}}})$} (D);
                  
                      \draw[->,thick] (A2) --node[below]{$\widehat{\nu}_{A_{{\nindex}}}$} (B2);
                      \draw[->,thick] (C2) --node[above]{$\widehat{\nu}_{B_{{\nindex}}}$} (D2);
                      \draw[->,thick] (A2) --node[left]{$\widehat{F}( \{\alpha_{{\nindex}}\} )$} (C2);
                      \draw[->,thick] (B2) --node[right]{$\widehat{G}( \{\alpha_{{\nindex}}\} )$} (D2);
                      
                      \draw[->>,thick] (A) -- (A2);
                      \draw[->>,thick] (B) -- (B2);
                      \draw[->>,thick] (C) -- (C2);
                      \draw[->>,thick] (D) -- (D2);
            \end{tikzpicture}
            \end{center}
      in which all rectangles that involve outer nodes commute, and in which $F(a_{\nindex}) \rightarrow \widehat{F}( A_{\nindex})$ is the epimorphism induced by \Cref{construction:extension_of_multi_functor}.
\end{proof}

By the functoriality of the cokernel, it is easy to see that
\[
      \widehat{\id_F} = \id_{\widehat{F}}
\]
for all multilinear functors $F: \prod_{i \in {\nindex}}\AC_i \longrightarrow \BC$
and
\[
      \widehat{\nu \circ \mu} = \widehat{\nu} \circ \widehat{\mu}
\]
for all composable natural transformations $\nu$ and $\mu$ between multilinear functors from $\prod_{i \in {\nindex}}\AC_i$ to $\BC$.
Thus, due to \Cref{construction:extension_of_multi_functor} and \Cref{construction:extension_of_multi_nat},
and \Cref{lemma:extension_commutes_with_cokernels_mult} we have a well-defined functor
\[
\Homlin ( (\AC_i)_{i \in {\nindex}}, \BC ) \longrightarrow \Homlincok ( (\Freyd( \AC_i ) )_{i \in {\nindex}}, \BC ): F \mapsto \widehat{F}
\]
that will turn out to constitute one direction of the desired equivalence stated in \Cref{theorem:2_cat_universal_freyd_mult}.
The other direction will simply be given by restriction:
\[
 \Homlincok ( (\Freyd( \AC_i ) )_{i \in {\nindex}}, \BC ) \longrightarrow \Homlin ( (\AC_i)_{i \in {\nindex}}, \BC ): G \mapsto G|_{\AC_{\nindex}} := G \circ \emb
\]
where $\emb: \prod_{i \in {\nindex}} \AC_i \hookrightarrow \prod_{i \in {\nindex}} \Freyd( \AC_i )$
denotes the componentwise embedding.

\begin{lemma}\label{lemma:natural_isos}
      There is an isomorphism
      \[
            F \simeq \widehat{F}|_{\AC_{\nindex}}
      \]
      natural in $F \in \Homlin ( (\AC_i)_{i \in {\nindex}}, \BC )$.
      Moreover, there is an isomorphism
      \[
            G \simeq \widehat{G|_{\AC_{\nindex}}}
      \]
      natural in $G \in \Homlincok ( (\Freyd( \AC_i ) )_{i \in {\nindex}}, \BC )$.
\end{lemma}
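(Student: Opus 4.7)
The plan is to verify both isomorphisms by a direct analysis of Construction \ref{construction:extension_of_multi_functor} on embedded objects, and then to exploit the universal property of cokernels in conjunction with the right exactness of $G$.

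For the first isomorphism $F \simeq \widehat{F}|_{\AC_{\nindex}}$, I would evaluate Construction \ref{construction:extension_of_multi_functor} at the embedded tuple $\emb(a_{\nindex}) = ((a_i \leftarrow 0))_{i \in \nindex}$. Since each $\rho_{a_j}$ is zero, multilinearity of $F$ forces every summand $F(a_{\nindex-j};0)$ to vanish, so $\widehat{F}(\emb(a_{\nindex}))$ is the cokernel of the zero morphism into $F(a_{\nindex})$ and is canonically identified with $F(a_{\nindex})$ itself. Naturality in the $a_i$ arguments is then immediate: for morphisms $\alpha_i:a_i \to a_i'$, the zero map is a legitimate morphism witness for $\emb(\alpha_i)$, so the action of $\widehat{F}$ prescribed by Construction \ref{construction:extension_of_multi_functor} collapses to $F(\alpha_{\nindex})$. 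Naturality in $F$ follows from Construction \ref{construction:extension_of_multi_nat} in the same fashion: on embedded arguments, the diagram defining $\widehat{\nu}$ degenerates to just the component $\nu_{a_{\nindex}}$.

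For the second isomorphism $G \simeq \widehat{G|_{\AC_{\nindex}}}$, the crucial observation is that every object $A_i = (a_i \xleftarrow{\rho_{a_i}} r_{a_i})$ of $\Freyd(\AC_i)$ is canonically presented as
\[
A_i \;=\; \cokernel\bigl(\{\rho_{a_i}\}: \emb(r_{a_i}) \longrightarrow \emb(a_i)\bigr)
\]
by Construction \ref{construction:cokernels_in_freyd}. Applying the right exactness criterion Lemma \ref{lemma:criterion_resp_cok} to the right exact multilinear functor $G$ at this $n$-tuple of morphisms produces an exact sequence
\[
\bigoplus_{j \in \nindex} G(\emb(a_{\nindex-j}); \emb(r_{a_j})) \longrightarrow G(\emb(a_{\nindex})) \longrightarrow G(A_{\nindex}) \longrightarrow 0
\]
whose leftmost map is $\bigl(G(\id; \{\rho_{a_j}\})\bigr)_{j \in \nindex}$. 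Since $G \circ \emb = G|_{\AC_{\nindex}}$, this exhibits $G(A_{\nindex})$ as the cokernel of exactly the morphism whose cokernel defines $\widehat{G|_{\AC_{\nindex}}}(A_{\nindex})$ in Construction \ref{construction:extension_of_multi_functor}; universality of the cokernel then yields a canonical isomorphism.

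The main obstacle is verifying naturality of this second isomorphism with respect to an arbitrary morphism $(\{\alpha_i, \omega_i\})_{i \in \nindex}$ in $\prod_i \Freyd(\AC_i)$. The difficulty is that the action of $\widehat{G|_{\AC_{\nindex}}}$ on morphisms is assembled by choosing the morphism witnesses $\omega_i$ and invoking functoriality of the cokernel, whereas the action of $G$ is intrinsic. To reconcile them, I would build a commutative diagram with two exact rows obtained by applying $G$ to the canonical presentations of $A_{\nindex}$ and $B_{\nindex}$, connected vertically by $G$ applied to the lift $\bigl(\emb(\alpha_{\nindex-j});\emb(\omega_j)\bigr)$ of the given data. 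The uniqueness part of the cokernel universal property then forces the two induced maps on $G(A_{\nindex}) \to G(B_{\nindex})$ and $\widehat{G|_{\AC_{\nindex}}}(A_{\nindex}) \to \widehat{G|_{\AC_{\nindex}}}(B_{\nindex})$ to agree via the isomorphism constructed above, establishing naturality.
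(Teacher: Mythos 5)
Your proposal is correct and follows essentially the same route as the paper: the first isomorphism comes from the vanishing of the summands $F(a_{\nindex-j};0)$ on embedded objects, and the second from presenting each $A_i$ as $\cokernel(\{\rho_{a_i}\}\colon \emb(r_{a_i})\to\emb(a_i))$ and invoking \Cref{lemma:criterion_resp_cok} to match $G(A_{\nindex})$ with the defining cokernel of $\widehat{G|_{\AC_{\nindex}}}(A_{\nindex})$. The only difference is that you spell out the naturality checks (in particular for morphisms of $\prod_i\Freyd(\AC_i)$ via uniqueness of induced maps on cokernels), which the paper leaves implicit.
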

\begin{proof}
      For $a_{\nindex} \in \prod_{i \in {\nindex}}\AC_i$, we obtain the desired natural isomorphism as
      \begin{align*}
            \widehat{F}(\emb( a_{\nindex} ) ) &\simeq \cokernel\left( F(a_{\nindex}) \longleftarrow \bigoplus_{j \in {\nindex}}F(a_{{\nindex}-j}; 0) \right) \\
            &\simeq \cokernel\left( F(a_{\nindex}) \longleftarrow 0 \right)  \\
            &\simeq F( a_{\nindex} ) \, .
      \end{align*}
      For $A_{\nindex} \in \prod_{i \in {\nindex}}\Freyd( \AC_i )$, we obtain the desired natural isomorphism as
      \begin{align*}
            G( A_{\nindex} ) &\simeq \cokernel\left( G(\emb(a_{\nindex})) \longleftarrow \bigoplus_{j \in {\nindex}}G(\emb(a_{{\nindex}-j};r_{a_j})) \right) & \text{(\Cref{lemma:criterion_resp_cok})}\\
            &\simeq \cokernel\left( {G|_{\AC_{\nindex}}}(a_{\nindex}) \longleftarrow \bigoplus_{j \in {\nindex}}{G|_{\AC_{\nindex}}}(a_{{\nindex}-j}; r_{a_j}) \right) \\
            &\simeq \widehat{G|_{\AC_{\nindex}}}( A_{\nindex} ) \, . & \qedhere
      \end{align*}
\end{proof}

In particular, \Cref{lemma:natural_isos}
states that the functors $F \mapsto \widehat{F}$ and $G \mapsto G|_{\AC_I}$
define an equivalence of categories, and thus, \Cref{theorem:2_cat_universal_freyd_mult} holds.

\section{Right exact monoidal structures on Freyd categories} \label{sec:FPMonoidalStructuresOnFreyd}

\subsection{Monoidal structures on Freyd categories} \label{subsec:MonoidalStructures}

In this Subsection, we recall the standard notions of the theory of monoidal categories,
but we will state the definitions within the context of Freyd categories.
The reason is that this will simplify our explanation of \fp promonoidal categories in the subsequent Subsection \ref{subsection:defining_pre_structures}.

\begin{notationnonumber}
      In this paper we frequently extend multilinear functors from $\prod_{i \in I}{\AC_i}$ to $\prod_{i \in I}{\Freyd( \AC_i )}$ and usually we reserve the symbol $\widehat{\phantom{-}}$ to denote such an extension. 
      However, in this very Subsection \ref{subsec:MonoidalStructures}, we make an exception of this rule,
      and use symbols like $\That$, $\Asshat$, $\Brhat$
      as variables that do not necessarily refer to such an extension.
      Again, the reason is a simplification of our explanation of \fp promonoidal categories in the subsequent Subsection.
\end{notationnonumber}

\subsubsection{Monoidal structures}\label{subsection:defining_monoidal_structures}

\begin{definition}[Semimonoidal structure] \label{def:SemiMonoidalStructure}
A semimonoidal structure on $\Freyd( \AC )$ consist of the following data:
\begin{enumerate}
 \item A bilinear functor $\oF \colon \Freyd( \AC ) \times \Freyd( \AC ) \to \Freyd( \AC )$ (\emph{tensor product}).
 \item An isomorphism
      $\Asshat_{A,B,C} \colon A \oF( B \oF C ) \xrightarrow{\sim} ( A \oF B ) \oF C $ natural in $A,B,C \in \Freyd( \AC )$ (\emph{associator}).
\end{enumerate}
This data is subject to the condition that the following diagram commutes for all $A,B,C,D \in \Freyd( \AC )$ (\emph{pentagon identity}):
\begin{equation} \label{diag:PentagonalIdentity}
\begin{tikzpicture}[baseline=(current bounding box.center),label/.style={postaction={
                  decorate,
                  decoration={markings, mark=at position .5 with \node #1;}},
                  mylabel/.style={thick, draw=black, align=center, minimum width=0.5cm, minimum height=0.5cm,fill=white}}]
                  
                \def\w{2.5};
                \node (a) at (0,2*\w) {$A \oF( B \oF (C \oF D) )$};
                \node (bl) at (1.5*\w,\w) {$(A \oF B) \oF (C \oF D)$};
                \node (br) at (-1.5*\w,\w) {$A \oF ( (B \oF C) \oF D )$};
                \node (cl) at (1.5*\w,0) {$((A \oF B) \oF C) \oF D$};
                \node (cr) at (-1.5*\w,0) {$(A \oF (B \oF C)) \oF D$};
                \draw[->, thick] (cr) to node[pos=0.45, above] {$\Asshat_{A,B,C} \oF D$} (cl);
                \draw[->, thick] (br) to node[pos=0.45, left] {$\Asshat_{A, B \oF C, D}$} (cr);
                \draw[->, thick] (bl) to node[pos=0.45, right] {$\Asshat_{A \oF B, C, D}$} (cl);
                \draw[->, thick] (a) to node[pos=0.45, above left] {$A \oF \Asshat_{B,C,D}$} (br);
                \draw[->, thick] (a) to node[pos=0.45, above right] {$\Asshat_{A,B,C \oF D}$} (bl);
        
                \node at (0.0*\w,1*\w) {$\circlearrowleft$};
\end{tikzpicture}
\end{equation}
\end{definition}

\begin{definition}[Monoidal structure] \label{def:MonoidalStructure}
A monoidal structure on $\Freyd( \AC )$ is a semimonoidal structure on $\Freyd( \AC )$ together with
\begin{enumerate}
 \item an object $1 \in \Freyd( \AC )$ (\emph{tensor unit}),
 \item a natural isomorphism $\LUhat_A \colon 1 \oF A \xrightarrow{\sim} A$ (\emph{left unitor}),
 \item and a natural isomorphism $\RUhat_A \colon A \oF 1 \xrightarrow{\sim} A$ (\emph{right unitor})
\end{enumerate}
such that the following diagram commutes for all $A,B \in \Freyd( \AC )$ (\emph{triangle identity}):
\begin{equation} \label{diag:TriangleIdentity}
\begin{tikzpicture}[baseline=(current bounding box.center),label/.style={postaction={
              decorate,
              decoration={markings, mark=at position .5 with \node #1;}},
              mylabel/.style={thick, draw=black, align=center, minimum width=0.5cm, minimum height=0.5cm,fill=white}}]
  
          \def\w{4};
          \def\h{2.5};
  
          \node (A) at (0,0) {$A \oF (1 \oF B)$};
          \node (B) at (2*\w,0) {$( A \oF 1) \oF B$};
          \node (C) at (\w,-\h) {$A \oF B$};
  
          \draw[->,thick] (A) --node[above]{$\Asshat_{A,1,B}$} (B);
          \draw[->,thick] (A) --node[below left]{$\id_A \oF \LUhat_B$} (C);
          \draw[->,thick] (B) --node[below right]{$\RUhat_A \oF \id_B$} (C);
  
          \node at (1.0*\w,-0.45*\h) {$\circlearrowleft$};
  
\end{tikzpicture}
\end{equation}
\end{definition}

\subsubsection{Braided monoidal structures}

\begin{definition}[Braided monoidal structure]
A monoidal structure on $\Freyd( \AC )$ together with a natural isomorphism $\Brhat_{A,B}: A \oF B \xrightarrow{\sim} B \oF A$ (\emph{braiding}) for $A,B \in \Freyd( \AC )$ is called a \emph{braided monoidal structure} if the following diagrams commute
for all $A,B,C \in \Freyd( \AC )$:
\begin{itemize}
 \item compatibility with left- and right unitor
              \begin{equation} \label{diag:BraidingI}
              \begin{tikzpicture}[baseline=(current bounding box.center),label/.style={postaction={
              decorate,
              decoration={markings, mark=at position .5 with \node #1;}},
              mylabel/.style={thick, draw=black, align=center, minimum width=0.5cm, minimum height=0.5cm,fill=white}}]
              
              \def\w{4.5};
              \def\h{2};
           
              \node (at1) at (0,0) {$A \oF 1$};
              \node (1ta) at (\w,0) {$1 \oF A$};
              \node (a) at (0.5*\w,-\h) {$A$};
           
              \draw[->,thick] (at1) --node[above]{$\Brhat_{A,1}$} (1ta);
              \draw[->,thick] (1ta) --node[below right]{$\LUhat_A$} (a);
              \draw[->,thick] (at1) --node[below left]{$\RUhat_A$} (a);
           
              \node at (0.5*\w,-0.45*\h) {$\circlearrowleft$};
           
              \end{tikzpicture}
              \end{equation}
 \item hexagonal identity I:
              \begin{equation} \label{diag:BraidingII}
              \begin{tikzpicture}[baseline=(current bounding box.center),label/.style={postaction={
              decorate,
              decoration={markings, mark=at position .5 with \node #1;}},
              mylabel/.style={thick, draw=black, align=center, minimum width=0.5cm, minimum height=0.5cm,fill=white}}]
              
              \def\w{2.5};
              \def\h{2};
              \node (p1) at (-\w,2*\h) {$( A \oF B ) \oF C$};
              \node (p2) at (\w,2*\h) {$C \oF ( A \oF B )$};
              \node (p3) at (-2*\w,\h) {$A \oF ( B \oF C )$};
              \node (p4) at (2*\w,\h) {$( C \oF A ) \oF B $};
              \node (p5) at (-\w,0) {$A \oF ( C \oF B )$};
              \node (p6) at (\w,0) {$( A \oF C ) \oF B$};
              \draw[->,thick] (p1) to node[above] {$\Brhat_{A \oF B, C}$} (p2);
              \draw[->,thick] (p2) to node[above right] {$\Asshat_{C,A,B}$} (p4);
              \draw[->,thick] (p4) to node[below right] {$( \Brhat_{C,A} \oF \mathrm{id}_{B} )$} (p6);
              \draw[->,thick] (p1) to node[above left] {$\Asshat^{-1}_{A,B,C}$} (p3);
              \draw[->,thick] (p3) to node[below left] {$( \mathrm{id}_{A} \oF \Brhat_{B,C} )$} (p5);
              \draw[->,thick] (p5) to node[above] {$\Asshat_{A,C,B}$} (p6);
           
              \node at (-0.0*\w,1.0*\h) {$\circlearrowleft$};
           
              \end{tikzpicture}
              \end{equation}
 \item hexagonal identity II:
              \begin{equation} \label{diag:BraidingIII}
              \begin{tikzpicture}[baseline=(current bounding box.center),label/.style={postaction={
              decorate,
              decoration={markings, mark=at position .5 with \node #1;}},
              mylabel/.style={thick, draw=black, align=center, minimum width=0.5cm, minimum height=0.5cm,fill=white}}]
              
              \def\w{2.5};
              \def\h{2};
              \node (p1) at (-\w,2*\h) {$A \oF ( B \oF C )$};
              \node (p2) at (\w,2*\h) {$( B \oF C ) \oF A$};
              \node (p3) at (-2*\w,\h) {$(A \oF B ) \oF C$};
              \node (p4) at (2*\w,\h) {$B \oF ( C \oF A)$};
              \node (p5) at (-\w,0) {$( B \oF A ) \oF C$};
              \node (p6) at (\w,0) {$B \oF ( A \oF C)$};
              \draw[->,thick] (p1) to node[above] {$\Brhat_{A, B \oF C}$} (p2);
              \draw[->,thick] (p2) to node[above right] {$\Asshat^{-1}_{B,C,A}$} (p4);
              \draw[->,thick] (p4) to node[below right] {$\mathrm{id}_{B} \oF \Brhat_{C,A}$} (p6);
              \draw[->,thick] (p1) to node[above left] {$\Asshat_{A,B,C}$} (p3);
              \draw[->,thick] (p3) to node[below left] {$\Brhat_{A,B} \oF \mathrm{id}_{C}$} (p5);
              \draw[->,thick] (p5) to node[above] {$\Asshat^{-1}_{B,A,C}$} (p6);
           
              \node at (-0.0*\w,1.0*\h) {$\circlearrowleft$};
           
              \end{tikzpicture}
              \end{equation}
\end{itemize}
\end{definition}

\begin{definition}[Symmetric monoidal structure]
A braided monoidal structure on $\Freyd( \AC )$ with the property $\Brhat_{A,B} = \Brhat_{B,A}^{-1}$ for $A,B \in \Freyd( \AC )$ is called a \emph{symmetric monoidal structure}.
\end{definition}

\subsubsection{Internal hom structures}

\begin{definition}[Internal hom structure] \label{Def:InternalHom}
An internal hom structure on $\Freyd( \AC )$ is a monoidal structure together with the following data:
\begin{enumerate}
 \item For every $A \in \Freyd( \AC )$, an additive functor $\widehat{\underline{\mathrm{Hom}}}( A, - ): \Freyd( \AC ) \rightarrow \Freyd( \AC )$  (\emph{internal hom}).
 \item For every $B \in \AC$, a morphism $\PreCoevhat_{A,B}: A \longrightarrow \widehat{\underline{\mathrm{Hom}}}( B, A \oF B )$
       natural in $A \in \Freyd( \AC )$ (\emph{coevaluation}).
 \item For every $A \in \AC$, a morphism $\PreEvhat_{A,B}: \widehat{\underline{\mathrm{Hom}}}( A, B ) \oF A \longrightarrow B$ natural in $B \in \Freyd( \AC )$ (\emph{evaluation}).
\end{enumerate}
This data is subject to the condition that
\begin{equation} \label{diag:IntHomI}
      \begin{tikzpicture}[baseline=(current bounding box.center),label/.style={postaction={
      decorate,
      decoration={markings, mark=at position .5 with \node #1;}},
      mylabel/.style={thick, draw=none, align=center, minimum width=0.5cm, minimum height=0.5cm,fill=white}}]
      \coordinate (r) at (5.5,0);
      \coordinate (d) at (0,-3);
      \node (A) {$A \oF B$};
      \node (B) at ($(A) + 1.2*(r)$) {$\widehat{\underline{\mathrm{Hom}}}( B, A \oF B ) \oF B$};
      \node (C) at ($(B)+(d)$) {$A \oF B$};
      
      \draw[->,thick] (A) --node[above]{$\PreCoevhat_{A,B} \oF B$} (B);
      \draw[->,thick] (B) --node[right]{$\PreEvhat_{B, A \oF B}$} (C);
      \draw[->,thick] (A) --node[below left]{$\id$} (C);

\end{tikzpicture}
\end{equation}
and
\begin{equation} \label{diag:IntHomII}
      \begin{tikzpicture}[baseline=(current bounding box.center),label/.style={postaction={
      decorate,
      decoration={markings, mark=at position .5 with \node #1;}},
      mylabel/.style={thick, draw=none, align=center, minimum width=0.5cm, minimum height=0.5cm,fill=white}}]

      \coordinate (r) at (5.5,0);
      \coordinate (d) at (0,-3);

      \node (A2) {$\widehat{\underline{\mathrm{Hom}}}( A,B )$};
      \node (B2) at ($(A2) + 1.2*(r)$) {$\widehat{\underline{\mathrm{Hom}}}\left( A, \widehat{\underline{\mathrm{Hom}}}(A,B) \oF A\right)$};
      \node (C2) at ($(B2)+(d)$) {$\widehat{\underline{\mathrm{Hom}}}(A,B)$};
      
      \draw[->,thick] (A2) --node[above]{$\PreCoevhat_{\widehat{\underline{\mathrm{Hom}}}(A,B),A}$} (B2);
      \draw[->,thick] (B2) --node[right]{$\widehat{\underline{\mathrm{Hom}}}( A, \PreEvhat_{A, B} )$} (C2);
      \draw[->,thick] (A2) --node[below left]{$\id$} (C2);

      \end{tikzpicture}
  \end{equation}
commute for all $A,B \in \Freyd( \AC )$.
\end{definition}

\begin{remark}
      The conditions stated in \Cref{Def:InternalHom}
      are the triangle identities characterizing $\PreCoevhat$ and $\PreEvhat$
      as unit and counit of an adjunction $- \oF B \dashv \widehat{\underline{\mathrm{Hom}}}(B,-)$.
\end{remark}

\begin{remark}
      In this paper, we restrict our attention to right adjoints of the functors $- \oF B$,
      since the discussion of right adjoints of $B \oF -$ is similar.
\end{remark}

\subsection{\texorpdfstring{\Fp}{F.p.} promonoidal structures on additive categories} \label{subsection:defining_pre_structures}

In this Subsection, we define our notion of \fp promonoidal structures on additive categories $\AC$. In 
the subsequent Subsection \ref{subsec:LiftsOfPreMonoidalStructures} we will show that they lift to right exact monoidal structures on $\Freyd( \AC )$.

\begin{notationnonumber}
      Let us emphasize that for the rest of this paper we use $\widehat{\phantom{-}}$ to denote extensions of multilinear functors and natural transformations from $\prod_{i \in \nindex}{\AC_i}$ to $\prod_{i \in \nindex}{\Freyd( \AC )}$,
      as they were introduced in \Cref{subsection:universal_prop_freyd}.
      For a given bilinear functor $T(-,-)$ of any kind, we also use the infix notation $(- \otimes_T -)$.
\end{notationnonumber}

\subsubsection{\texorpdfstring{\Fp}{F.p.} promonoidal structures}

\begin{definition}[\Fp prosemimonoidal structure] \label{def:proSemiMonoidalStructure}
An \fp prosemimonoidal structure on $\AC$ consist of the following data:
\begin{enumerate}
 \item A bilinear functor $T \colon \AC \times \AC \to \Freyd( \AC )$ (\emph{\fp protensor product}).
 \item An isomorphism $\Ass_{a,b,c} \colon a \oF \left( b \oF c \right) \xrightarrow{\sim} \left( a \oF b \right) \oF c$ natural in $a,b,c \in \AC$ (\emph{\fp proassociator}).
\end{enumerate}
This data is subject to the condition that the following diagram commutes for all $a,b,c,d \in \AC$
\emph{(restricted pentagon identity)}:
\begin{equation} \label{diag:RestrictedPentagonalIdentity}
\begin{tikzpicture}[baseline=(current bounding box.center),label/.style={postaction={
                  decorate,
                  decoration={markings, mark=at position .5 with \node #1;}},
                  mylabel/.style={thick, draw=black, align=center, minimum width=0.5cm, minimum height=0.5cm,fill=white}}]
                  
                \def\w{2.5};
                \node (a) at (0,2*\w) {$a \oF( b \oF (c \oF d) )$};
                \node (bl) at (1.5*\w,\w) {$(a \oF b) \oF (c \oF d)$};
                \node (br) at (-1.5*\w,\w) {$a \oF ( (b \oF c) \oF d )$};
                \node (cl) at (1.5*\w,0) {$((a \oF b) \oF c) \oF d$};
                \node (cr) at (-1.5*\w,0) {$(a \oF (b \oF c)) \oF d$};
                \draw[->, thick] (cr) to node[pos=0.45, above] {$\Asshat_{a,b,c} \oF d$} (cl);
                \draw[->, thick] (br) to node[pos=0.45, left] {$\Asshat_{a, b \oF c, d}$} (cr);
                \draw[->, thick] (bl) to node[pos=0.45, right] {$\Asshat_{a \oF b, c, d}$} (cl);
                \draw[->, thick] (a) to node[pos=0.45, above left] {$a \oF \Asshat_{b,c,d}$} (br);
                \draw[->, thick] (a) to node[pos=0.45, above right] {$\Asshat_{a,b,c \oF d}$} (bl);
        
                \node at (0.0*\w,1*\w) {$\circlearrowleft$};
\end{tikzpicture}
\end{equation}
\end{definition}

\begin{remark}
      Note that in order to write down the restricted pentagon identity,
      it is necessary to use the extended functor $\That$, since $T(a,b)$ for $a,b \in \AC$
      does not have to be an object in $\AC$ (regarded as a full subcategory of $\Freyd( \AC )$, \Cref{remark:implicit_emb}).
      Moreover, let us point out the similarity between \Cref{diag:PentagonalIdentity} and \Cref{diag:RestrictedPentagonalIdentity}. 
      Namely, the latter is obtained by evaluating the former only at objects of $\AC$.
      This is why we term \Cref{diag:RestrictedPentagonalIdentity} the restriction of \Cref{diag:PentagonalIdentity} to $\AC$,
      and in a similar way, we will refer to restrictions of the triangle identity, hexagonal identities, etc.
\end{remark}

\begin{definition}[\Fp promonoidal structure] \label{def:preMonoidalStructure}
An \fp promonoidal structure on $\AC$ is an \fp prosemimonoidal structure on $\AC$ together with
\begin{enumerate}
 \item an object $1 \in \Freyd( \AC )$ (\emph{\fp protensor unit}),
 \item an isomorphism $\LU_a \colon 1 \oF  a  \xrightarrow{\sim} a $ natural in $a \in \AC$ (\emph{\fp left prounitor}),
 \item an isomorphism $\RU_a \colon a \oF 1 \xrightarrow{\sim} a$ natural in $a \in \AC$ (\emph{\fp right prounitor}),
\end{enumerate}
such that the restriction of the triangle identity \Cref{diag:TriangleIdentity} to $\AC$ commutes.
\end{definition}

\subsubsection{\Fp braided promonoidal structures}

\begin{definition}[\Fp braided promonoidal structure]
An \fp promonoidal structure on an additive category $\AC$ together with a natural isomorphism $\Br_{a,b}: a \oF b \xrightarrow{{\sim}} b \oF a$ (\emph{\fp probraiding}) 
for $a,b \in \AC$ is called an \emph{\fp braided promonoidal structure} if the restrictions of \Cref{diag:BraidingI}, \Cref{diag:BraidingII} and \Cref{diag:BraidingIII} to $\AC$ commute.
\end{definition}

\begin{definition}[Symmetric \fp promonoidal structure]
An \fp braided promonoidal structure on an additive category $\AC$ with the property $\Br_{a,b} = \Br_{b,a}^{-1}$ for all $a,b \in \AC$ is called a \emph{symmetric \fp promonoidal structure}.
\end{definition}

\subsubsection{\Fp prointernal hom}

\begin{definition}[\Fp prointernal hom] \label{Def:PreInternalHom}

An \fp prointernal hom structure on $\Freyd( \AC )$ is an \fp promonoidal structure together with the following data: 
\begin{enumerate}
 \item For every $a \in \AC$, an additive functor $\underline{\mathrm{Hom}}( a, - ): \AC \rightarrow \Freyd( \AC )$ (\emph{\fp prointernal hom}).
 \item For every $a \in \AC$, a morphism $\PreCoev_{b,a}: b \longrightarrow \underline{\mathrm{Hom}}( a, b \oF a )$ natural in $b \in \AC$ (\emph{\fp procoevaluation}).
 \item For every $a \in \AC$, a morphism $\PreEv_{a,b}: \underline{\mathrm{Hom}}( a, b ) \oF a \longrightarrow b$ natural in $b \in \AC$ (\emph{\fp proevaluation}).
\end{enumerate}
As usual, the extensions of these data to the Freyd category $\Freyd( \AC )$ are denoted by $\widehat{\underline{\mathrm{Hom}}}( a, - )$, $\PreCoevhat_{-,a}$ and $\PreEvhat_{a,-}$, respectively. 
Baring this notation in mind, the data is moreover subject to the condition that the restrictions of \Cref{diag:IntHomI} and \Cref{diag:IntHomII} to $\AC$ commute.
\end{definition}

\subsection{From \texorpdfstring{\fp}{f.p.} promonoidal structures to right exact monoidal structures} \label{subsec:LiftsOfPreMonoidalStructures}

In this Subsection, we describe how \fp promonoidal structures on an additive category $\AC$ give rise to
right exact monoidal structures on $\Freyd( \AC )$.
We fix an \fp protensor product $T \colon \AC \times \AC \to \Freyd( \AC )$,
and, as usual, denote its extension to $\Freyd( \AC )$ by $\oF$.
Whenever we refer to another promonoidal datum, like an \fp proassociator,
we do this w.r.t.\ $T$.

The proofs on how promonoidal data on $\AC$ extend to monoidal data on $\Freyd( \AC )$
all follow the same scheme: we check the defining identities by restricting to $\AC$
and applying \Cref{theorem:2_cat_universal_freyd_mult}. Only the case of extending \fp pre internal homs 
to internal homs is more involved, which is due to its contravariant first component.

\subsubsection{Associators}\label{subsubsection:associators}

\begin{lemma}\label{lemma:Asshat_iso}
      Let $\Ass$ be an \fp proassociator.
      Then
      $\Asshat_{A,B,C}$ is an isomorphism for all $A,B,C \in \Freyd( \AC )$
      if and only if $\Ass_{a,b,c}$ is an isomorphism for all $a,b,c \in \Freyd( \AC )$.
\end{lemma}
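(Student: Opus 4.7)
The plan is to deduce this equivalence directly from the multilinear universal property, \Cref{theorem:2_cat_universal_freyd_mult}, by recognising both sides of the implication as statements about invertibility of natural transformations in the two equivalent functor categories. The forward direction is immediate upon restricting to the subcategory $\AC \subseteq \Freyd(\AC)$ through $\emb$, since evaluating an invertible natural transformation at a smaller class of arguments still yields isomorphisms; the content is in the reverse implication.

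To set up the formal framework, introduce the two trilinear functors $T_{1,23}, T_{12,3} \colon \AC \times \AC \times \AC \to \Freyd(\AC)$ defined by $T_{1,23}(a,b,c) := a \oF (b \oF c)$ and $T_{12,3}(a,b,c) := (a \oF b) \oF c$. By \Cref{lemma:extension_commutes_with_cokernels_mult}, their extensions $\widehat{T_{1,23}}$ and $\widehat{T_{12,3}}$ are trilinear right exact functors on $\Freyd(\AC)^3$ and, by \Cref{lemma:natural_isos}, they are naturally isomorphic to the functors $(A,B,C) \mapsto A \oF (B \oF C)$ and $(A,B,C) \mapsto (A \oF B) \oF C$ respectively. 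Under these identifications the \fp proassociator is a morphism $\Ass \colon T_{1,23} \to T_{12,3}$ in $\Homlin(\AC^{3}, \Freyd(\AC))$, and the associator $\Asshat$ is, by \Cref{construction:extension_of_multi_nat}, precisely its extension $\widehat{\Ass}$ in $\Homlincok(\Freyd(\AC)^{3}, \Freyd(\AC))$.

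Now observe that a natural transformation in a functor category is an isomorphism if and only if each of its components is. Hence the condition ``$\Ass_{a,b,c}$ is an isomorphism for all $a,b,c \in \AC$'' is equivalent to ``$\Ass$ is an isomorphism in $\Homlin(\AC^{3}, \Freyd(\AC))$'', and analogously for $\Asshat$. \Cref{theorem:2_cat_universal_freyd_mult} provides an equivalence of categories between these two functor categories via the extension functor $F \mapsto \widehat{F}$. Since every equivalence of categories preserves and reflects isomorphisms between morphism objects, $\Ass$ is an isomorphism in $\Homlin$ if and only if $\Asshat = \widehat{\Ass}$ is an isomorphism in $\Homlincok$, giving both directions of the claim.

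I do not expect a genuine obstacle: the reasoning is purely formal once the universal property is available. The one item that requires attention is the bookkeeping identification of $\Asshat$ with $\widehat{\Ass}$ up to the canonical natural isomorphisms of \Cref{lemma:natural_isos}, but this is compatible with both naturality and invertibility because those natural isomorphisms are themselves isomorphisms.
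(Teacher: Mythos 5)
Your proposal is correct and takes essentially the same approach as the paper: both recognize $\Ass$ and $\Asshat$ as corresponding morphisms between the two trilinear functors under the equivalence of \Cref{theorem:2_cat_universal_freyd_mult} (the paper phrases this via the restriction functor $\nu \mapsto \nu \circ \emb$, you via the extension $\nu \mapsto \widehat{\nu}$, which are the two directions of the same equivalence) and conclude by noting that an equivalence of categories preserves and reflects isomorphisms, together with the componentwise criterion for a natural transformation to be invertible.
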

\begin{proof}
      We set 
      \[
            F\colon  \Freyd( \AC )^3 \rightarrow \Freyd( \AC ): (A,B,C) \mapsto A \oF ( B \oF C )
      \]
      and
      \[
            G\colon  \Freyd( \AC )^3 \rightarrow \Freyd( \AC ): (A,B,C) \mapsto (A \oF  B) \oF C \, .
      \]
      Now, the claim follows since, by \Cref{theorem:2_cat_universal_freyd_mult}, we have a bijection between sets of natural transformations
      \begin{align*}
            \Hom_{\Homlincok( \Freyd( \AC )^3, \Freyd( \AC ) )}(F,G) &\xrightarrow{\sim} \Hom_{\Homlin( \AC^3, \Freyd( \AC ) )}(F \circ \emb, G \circ \emb) \\
            \nu &\mapsto \nu \circ \emb
      \end{align*}
      which respects isomorphisms.
\end{proof}

\begin{lemma}\label{lemma:pentagonal_id}
      Let $\Ass$ be an \fp proassociator (for $T$).
      Then $\Asshat$ satisfies the pentagonal identity if and only if the restricted pentagonal identity in \Cref{diag:RestrictedPentagonalIdentity} is satisfied.
\end{lemma}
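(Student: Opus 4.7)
The strategy is to realise both sides of the pentagon identity as the components of two natural transformations $\Phi_1, \Phi_2 \colon F \Rightarrow G$ between the four-linear functors
\[
F(A,B,C,D) := A \oF \bigl( B \oF ( C \oF D ) \bigr), \qquad G(A,B,C,D) := \bigl( ( A \oF B ) \oF C \bigr) \oF D
\]
from $\Freyd(\AC)^4$ to $\Freyd(\AC)$. Both $F$ and $G$ are multilinear and right exact in each argument, since $\oF$ is bilinear and right exact. The natural transformations $\Phi_1$ and $\Phi_2$ are obtained as the clockwise and counterclockwise composites of $\Asshat$ (and its whiskerings by identities) along the pentagon \Cref{diag:PentagonalIdentity}, so that the pentagon identity for $\Asshat$ is precisely the equation $\Phi_1 = \Phi_2$.

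The forward direction is then immediate: evaluating $\Phi_1 = \Phi_2$ at objects of the form $A = \emb(a)$, $B = \emb(b)$, $C = \emb(c)$, $D = \emb(d)$ yields exactly the restricted pentagon identity of \Cref{diag:RestrictedPentagonalIdentity}.

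For the converse, the multilinear universal property \Cref{theorem:2_cat_universal_freyd_mult} supplies an equivalence of categories between $\Homlincok((\Freyd(\AC))^4, \Freyd(\AC))$ and $\Homlin(\AC^4, \Freyd(\AC))$ given by componentwise restriction along $\emb$. Since an equivalence of categories is in particular faithful, two natural transformations $F \Rightarrow G$ between right exact multilinear functors are equal whenever their restrictions to $\AC^4$ coincide. The restrictions of $\Phi_1$ and $\Phi_2$ are exactly the two composites appearing in the restricted pentagon \Cref{diag:RestrictedPentagonalIdentity}, which agree by hypothesis; hence $\Phi_1 = \Phi_2$ on all of $\Freyd(\AC)^4$.

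The only point that warrants care is confirming that $\Phi_1$ and $\Phi_2$ are genuinely natural transformations from $F$ to $G$ (rather than between some intermediate bracketings), which amounts to compatibly composing $\Asshat$ along each edge of the pentagon using naturality of $\Asshat$ in its three arguments. This is routine and does not require any new construction beyond \Cref{subsection:universal_prop_freyd}.
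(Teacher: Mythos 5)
Your proof is correct and follows essentially the same route as the paper: both realise the two pentagon composites as natural transformations between the right exact multilinear functors $A \oF (B \oF (C \oF D))$ and $((A \oF B) \oF C) \oF D$, and both deduce their equality from the faithfulness of the restriction functor in the equivalence of \Cref{theorem:2_cat_universal_freyd_mult}. Nothing further is needed.
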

\begin{proof}
      We set 
      \[
            F\colon - \oF (- \oF ( - \oF - ) ): \Freyd( \AC )^4 \rightarrow \Freyd( \AC )
      \]
      and
      \[
            G\colon ( ( - \oF -) \oF - ) \oF - : \Freyd( \AC )^4 \rightarrow \Freyd( \AC ) \, .
      \]
      We need to compare the two isomorphisms
      \[
            p_{A,B,C,D} := \Asshat_{A,B,C \oF D} \circ \Asshat_{A, B \oF C, D} \circ \left(A \oF \Asshat_{B,C,D}\right): F(A,B,C,D) \xrightarrow{{\sim}} G(A,B,C,D)
      \]
      and
      \[
            q_{A,B,C,D} := \Asshat_{A,B,C \oF D} \circ \Asshat_{A \oF B, C, D}: F(A,B,C,D) \xrightarrow{{\sim}} G(A,B,C,D)
      \]
      natural in $A,B,C,D \in \Freyd( \AC )$.
      By \Cref{theorem:2_cat_universal_freyd_mult}, the map
      \[
            \Hom_{\Homlincok(\Freyd( \AC )^4, \Freyd( \AC ) )}( F, G ) \rightarrow \Hom_{\Homlin( \AC^4, \Freyd( \AC ))}( F \circ \emb, G \circ \emb )
      \]
      is an isomorphism, and thus, $p \circ \emb = q \circ \emb$ if and only if $p = q$.
      But $p \circ \emb = q \circ \emb$ is exactly the restricted pentagonal identity.
\end{proof}

\subsubsection{Unitors}\label{subsubsection:unitors}
For the following two lemmata, let $1$ be an \fp protensor unit,
$\LU$ be an \fp left prounitor, $\RU$ be an \fp right prounitor.

\begin{lemma}
      $\LU_a$ is an isomorphism for all $a \in \AC$ if and only if $\LUhat_A$ is an isomorphism for all $A \in \Freyd( \AC )$.
      Similarly, 
      $\RU_a$ is an isomorphism for all $a \in \AC$ if and only if $\RUhat_A$ is an isomorphism for all $A \in \Freyd( \AC )$.
\end{lemma}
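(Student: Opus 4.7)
The plan is to mimic exactly the strategy of \Cref{lemma:Asshat_iso}, invoking the multilinear $2$-categorical universal property in the unary case ($n=1$). Concretely, I would set
\[
      F \colon \AC \longrightarrow \Freyd(\AC), \quad a \mapsto 1 \oF a,
      \qquad
      G \colon \AC \longrightarrow \Freyd(\AC), \quad a \mapsto \emb(a),
\]
both of which are additive (i.e.\ $1$-ary multilinear) functors from $\AC$ to $\Freyd(\AC)$. The \fp left prounitor $\LU$ is a morphism $F \to G$ in $\Homlin(\AC, \Freyd(\AC))$. Its extension $\widehat{F \vphantom{G}}, \widehat{G}\colon \Freyd(\AC) \to \Freyd(\AC)$ produced by \Cref{construction:extension_of_multi_functor} are exactly $A \mapsto 1 \oF A$ and $A \mapsto A$ respectively (using \Cref{lemma:natural_isos} for the latter), and $\LUhat$ is by definition the extension of $\LU$ in the sense of \Cref{construction:extension_of_multi_nat}.

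Next I would apply \Cref{theorem:2_cat_universal_freyd_mult} (in the case $n=1$): the assignment $\nu \mapsto \widehat{\nu}$ is one half of an equivalence of categories
\[
      \Homlin(\AC, \Freyd(\AC)) \simeq \Homlincok(\Freyd(\AC), \Freyd(\AC)),
\]
whose other half is given by restriction along $\emb$. Since any equivalence of categories reflects and preserves isomorphisms, $\LU$ is an isomorphism in the left-hand category if and only if $\LUhat$ is an isomorphism in the right-hand category. Being an isomorphism in a functor category is the same as being pointwise an isomorphism, which gives the two equivalent statements in the lemma for $\LU$ versus $\LUhat$.

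The argument for $\RU$ is completely parallel, with $F(a) := a \oF 1$ in place of $1 \oF a$. I expect no genuine obstacle beyond the bookkeeping of checking that the extension $\widehat{F \vphantom{G}}$ of $a \mapsto 1 \oF a$ is naturally isomorphic to the functor $A \mapsto 1 \oF A$ appearing in the statement, but this is immediate from the right exactness of $1 \oF (-)$ together with \Cref{lemma:natural_isos}, so the proof should be essentially a one-line invocation of the universal property, exactly as in \Cref{lemma:Asshat_iso}.
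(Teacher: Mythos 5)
Your proposal is correct and matches the paper's intended argument: the paper's own ``proof'' is literally the remark that one may use the same strategy as for \Cref{lemma:Asshat_iso}, which is exactly what you carry out (apply \Cref{theorem:2_cat_universal_freyd_mult} in the unary case to the functors $a \mapsto 1 \oF a$ and $a \mapsto \emb(a)$, and use that an equivalence of categories preserves and reflects isomorphisms). Your extra care in identifying $\widehat{F}$ with $A \mapsto 1 \oF A$ via right exactness and \Cref{lemma:natural_isos} is a detail the paper leaves implicit, but it is handled correctly.
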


\begin{lemma}
      $\LUhat$ and $\RUhat$ satisfy the triangle identity \Cref{diag:TriangleIdentity} if and only if 
      the restriction of the triangle identity to $\AC$ is satisfied.
\end{lemma}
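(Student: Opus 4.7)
The plan is to mimic the proof of \Cref{lemma:pentagonal_id} essentially verbatim, but with the two sides of the triangle identity in place of the two sides of the pentagon. I would define two bilinear right exact functors
\[
      F, G : \Freyd(\AC) \times \Freyd(\AC) \longrightarrow \Freyd(\AC)
\]
by $F(A,B) := A \oF ( 1 \oF B )$ and $G(A,B) := A \oF B$. Both are right exact and bilinear since they are obtained from the right exact bilinear tensor product $\oF$ by fixing the \fp protensor unit $1$ in one slot (a fixed right adjointable argument preserves right exactness in the remaining variables).

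Next, consider the two natural transformations $p, q \colon F \Rightarrow G$ given by
\[
      p_{A,B} := (\RUhat_A \oF \id_B) \circ \Asshat_{A, 1, B}
      \qquad\text{and}\qquad
      q_{A,B} := \id_A \oF \LUhat_B ,
\]
whose equality is exactly the triangle identity \Cref{diag:TriangleIdentity}. Naturality of $p$ and $q$ in $A,B \in \Freyd( \AC )$ follows from naturality of $\Asshat$, $\LUhat$, $\RUhat$ and bilinearity of $\oF$.

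By \Cref{theorem:2_cat_universal_freyd_mult}, the restriction functor
\[
      \Homlincok\bigl(\Freyd(\AC)^2, \Freyd(\AC)\bigr)
      \longrightarrow
      \Homlin\bigl(\AC^2, \Freyd(\AC)\bigr)
\]
is an equivalence of categories, hence it induces a bijection on Hom-sets
$\Hom(F,G) \to \Hom(F \circ \emb,\, G \circ \emb)$. Therefore $p = q$ if and only if $p \circ \emb = q \circ \emb$, and the latter equation unfolds precisely to the commutativity of the restricted triangle identity on objects $a,b \in \AC$.

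I do not expect any real obstacle, as the argument is structurally identical to the proofs of \Cref{lemma:Asshat_iso} and \Cref{lemma:pentagonal_id}; the only point that deserves a brief sentence in the write-up is the verification that $F$ and $G$ are bilinear and right exact so that \Cref{theorem:2_cat_universal_freyd_mult} indeed applies.
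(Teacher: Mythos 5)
Your proof is correct and is precisely the argument the paper intends: the paper's own ``proof'' of this lemma is just the remark that one may reuse the proof strategy of Subsubsection \ref{subsubsection:associators}, and your write-up carries that strategy out with the right choice of $F$, $G$, $p$, and $q$. One cosmetic point: right exactness of $F(A,B)=A\oF(1\oF B)$ needs no appeal to adjointability of the fixed slot --- it follows directly from the componentwise right exactness of $\oF$ guaranteed by \Cref{lemma:extension_commutes_with_cokernels_mult} and the fact that a composite of right exact functors is right exact.
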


For both proofs, we may use the same proof strategy as in Subsubsection \ref{subsubsection:associators}.

\subsubsection{Braiding}
For the following two lemmata, let $\Br$ be an \fp probraiding.

\begin{lemma}
      $\Brhat_{A,B}$ is an isomorphism for all $A,B \in \Freyd( \AC )$ if and only if $\Br_{a,b}$ is an isomorphism for all $a,b \in \Freyd( \AC )$.
\end{lemma}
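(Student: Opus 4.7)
The plan is to mimic the proof strategy of \Cref{lemma:Asshat_iso} and reduce the question to an application of the multilinear universal property of Freyd categories (\Cref{theorem:2_cat_universal_freyd_mult}).

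First, I would introduce the two bilinear right exact functors
\[
      F\colon \Freyd(\AC)^2 \rightarrow \Freyd(\AC)\colon (A,B) \mapsto A \oF B
\]
and
\[
      G\colon \Freyd(\AC)^2 \rightarrow \Freyd(\AC)\colon (A,B) \mapsto B \oF A,
\]
both of which lie in $\Homlincok(\Freyd(\AC)^2, \Freyd(\AC))$ since $\oF$ is bilinear and right exact. Under this setup, $\Brhat$ is a morphism $F \rightarrow G$ in the $2$-category $\Homlincok(\Freyd(\AC)^2, \Freyd(\AC))$, while $\Br$ corresponds to the restriction to a morphism $F \circ \emb \rightarrow G \circ \emb$ in $\Homlin(\AC^2, \Freyd(\AC))$.

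Next, I would invoke \Cref{theorem:2_cat_universal_freyd_mult} to obtain the bijection
\[
      \Hom_{\Homlincok(\Freyd(\AC)^2, \Freyd(\AC))}(F,G) \xrightarrow{\sim} \Hom_{\Homlin(\AC^2, \Freyd(\AC))}(F \circ \emb, G \circ \emb)\colon \nu \mapsto \nu \circ \emb,
\]
which arises from the equivalence of categories established in \Cref{lemma:natural_isos}. Since equivalences of categories preserve and reflect isomorphisms, the natural transformation $\Brhat$ is an isomorphism in $\Homlincok(\Freyd(\AC)^2, \Freyd(\AC))$ if and only if its restriction $\Br = \Brhat \circ \emb$ is an isomorphism in $\Homlin(\AC^2, \Freyd(\AC))$. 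A natural transformation is an isomorphism in a functor category if and only if each of its components is an isomorphism, so this yields exactly the claimed componentwise equivalence.

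I do not anticipate any genuine obstacle here; this is a direct reuse of the template from \Cref{lemma:Asshat_iso} and the unitor lemmata. The only minor subtlety to keep in mind is the verification that the swap functor $G$ is multilinear and right exact, which is immediate since it is obtained from $\oF$ by precomposition with the symmetry isomorphism on $\Freyd(\AC)^2$, and both of these properties are preserved under such precomposition.
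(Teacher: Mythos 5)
Your proposal is correct and follows exactly the route the paper intends: the paper's own proof of this lemma is just the remark that ``we may use the same proof strategy as in Subsubsection on associators,'' i.e.\ set up the two right exact bilinear functors $F(A,B)=A\oF B$ and $G(A,B)=B\oF A$ and apply the bijection on natural transformations from \Cref{theorem:2_cat_universal_freyd_mult}, which respects isomorphisms. Your additional remark that right exactness of the swapped functor $G$ is immediate is a reasonable (and correct) explicit check that the paper leaves implicit.
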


\begin{lemma}
$\Brhat_{-,-}$ defines a braided monoidal structure for the monoidal structure on $\Freyd( \AC )$ defined by $\oF$ and $\Asshat$ if and only if we have
\begin{itemize}
   \item $\Brhat$ restricted compatibility with left- and right unitor, i.e. commutativity of \Cref{diag:BraidingI} restricted to $\AC$,
   \item commutativity of restricted hexagonal identity I, i.e. of \Cref{diag:BraidingII} restricted to $\AC$,
   \item commutativity of restricted hexagonal identity II, i.e. of \Cref{diag:BraidingIII} restricted to $\AC$.
\end{itemize}
Moreover, $\Brhat$ is symmetric if and only if the identity
\[ \Br_{b,a} \circ \Br_{a,b} = \id_{a \oF b} \]
holds for all $a,b \in \AC$.
\end{lemma}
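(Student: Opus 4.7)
The plan is to reuse the strategy of \Cref{lemma:Asshat_iso} and \Cref{lemma:pentagonal_id}: every diagram in the statement is an equation between two parallel natural transformations of multilinear right exact functors on a suitable power $\Freyd(\AC)^n$, and the multilinear universal property \Cref{theorem:2_cat_universal_freyd_mult} reduces such an equation to its restriction to $\AC^n$.

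Concretely, for each of the three braiding axioms \Cref{diag:BraidingI}, \Cref{diag:BraidingII} and \Cref{diag:BraidingIII}, I would read both circuits around the diagram as parallel natural transformations between the same pair of functors $\Freyd(\AC)^n \to \Freyd(\AC)$, with $n=1$ in the unitor compatibility case and $n=3$ in both hexagons. These source and target functors (e.g.\ $(- \oF -) \oF -$ and $- \oF (- \oF -)$ with various permutations of arguments) are built from $\oF$, identities, and the constant functor at $1$; since $\oF$ is bilinear and right exact, and the class of right exact multilinear functors is closed under composition and permutation of arguments, both source and target lie in $\Homlincok(\Freyd(\AC)^n, \Freyd(\AC))$. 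The bijection
\[
\Hom_{\Homlincok(\Freyd(\AC)^n, \Freyd(\AC))}(F,G) \xrightarrow{\sim} \Hom_{\Homlin(\AC^n, \Freyd(\AC))}(F \circ \emb, G \circ \emb)
\]
from \Cref{theorem:2_cat_universal_freyd_mult} then shows that the two circuits agree on $\Freyd(\AC)^n$ if and only if their restrictions agree on $\AC^n$, which is precisely the commutativity of the restricted diagram.

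For the symmetry claim, since $\Brhat_{A,B}$ and $\Brhat_{B,A}$ are already isomorphisms, the identity $\Brhat_{A,B} = \Brhat_{B,A}^{-1}$ is equivalent to $\Brhat_{B,A} \circ \Brhat_{A,B} = \id_{A \oF B}$. Both sides are natural endotransformations of the bilinear right exact functor $\oF$ on $\Freyd(\AC)^2$, so once again \Cref{theorem:2_cat_universal_freyd_mult} reduces the equation to its restriction on $\AC^2$, namely $\Br_{b,a} \circ \Br_{a,b} = \id_{a \oF b}$ for all $a,b \in \AC$.

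The only real obstacle is organisational: one must carefully identify the source and target functors of each circuit around each hexagon and confirm that they are indeed right exact multilinear functors on $\Freyd(\AC)^3$. Once this is verified, the universal property trivialises the rest, and no further computation is needed.
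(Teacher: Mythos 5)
Your proposal is correct and follows exactly the route the paper intends: the paper gives no separate argument for this lemma but explicitly defers to the proof strategy of Subsubsection \ref{subsubsection:associators}, i.e.\ reading each circuit of \Cref{diag:BraidingI}, \Cref{diag:BraidingII}, \Cref{diag:BraidingIII} (and of the symmetry equation) as a natural transformation between right exact multilinear functors on the appropriate power of $\Freyd(\AC)$ and invoking the bijection of \Cref{theorem:2_cat_universal_freyd_mult} as in \Cref{lemma:Asshat_iso} and \Cref{lemma:pentagonal_id}. Your added care in identifying the source and target functors of each circuit and in rewriting symmetry as $\Brhat_{B,A} \circ \Brhat_{A,B} = \id_{A \oF B}$ is exactly the bookkeeping the paper leaves implicit.
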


Again, for both proofs, we may use the same proof strategy as in Subsubsection \ref{subsubsection:associators}.

\subsubsection{Internal homomorphisms}

The goal of this Subsection is to find sufficient conditions
that allow us to create an internal hom functor for $\oF$.

\begin{lemma}
Suppose we are given an \fp prointernal hom structure on $\Freyd( \AC )$.
Then we have
\[ \left(- \oF a \right) \dashv \widehat{\mathrm{\underline{Hom}} ( a, - )} \]
for all $a \in \AC$, \ie, $- \oF a: \Freyd( \AC ) \rightarrow \Freyd( \AC )$ is left adjoint to $\widehat{\mathrm{\underline{Hom}}} ( a, - ): \Freyd( \AC ) \rightarrow \Freyd( \AC )$.
\end{lemma}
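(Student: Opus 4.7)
The plan is to provide an explicit unit and counit for the candidate adjunction, and then to reduce each triangle identity to its restriction on $\AC$ via the multilinear $2$-categorical universal property.

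As unit I would take $\eta := \PreCoevhat_{-,a}$, with components $\eta_A \colon A \to \widehat{\underline{\mathrm{Hom}}}(a, A \oF a)$ natural in $A \in \Freyd( \AC )$, and as counit $\varepsilon := \PreEvhat_{a,-}$, with components $\varepsilon_B \colon \widehat{\underline{\mathrm{Hom}}}(a, B) \oF a \to B$ natural in $B \in \Freyd( \AC )$. These extensions, and their naturality in the extended argument, are provided by \Cref{construction:extension_of_multi_nat}. All four endofunctors of $\Freyd( \AC )$ involved in what follows are right exact: the tensor product $\oF$ is right exact by construction of the monoidal structure on $\Freyd( \AC )$, so in particular $- \oF a$ is; $\widehat{\underline{\mathrm{Hom}}}(a,-)$ is right exact by \Cref{lemma:extension_commutes_with_cokernels_mult}; and the identity functor is trivially right exact.

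I would then verify the first triangle identity
\[
    \varepsilon_{A \oF a} \circ (\eta_A \oF a) \;=\; \id_{A \oF a}
\]
for all $A \in \Freyd( \AC )$. Both sides are morphisms in $\Hom_{\Homlincok(\Freyd( \AC ), \Freyd( \AC ))}\bigl((- \oF a), (- \oF a)\bigr)$, so by \Cref{theorem:2_cat_universal_freyd_mult} they coincide if and only if they agree after restricting along $\emb$, i.e.\ for $A = b \in \AC$. In that case the equation becomes
\[
    \PreEvhat_{a, b \oF a} \circ (\PreCoev_{b,a} \oF a) \;=\; \id_{b \oF a},
\]
which is exactly the restriction of \Cref{diag:IntHomI} to $\AC$ required by \Cref{Def:PreInternalHom}.

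The second triangle identity
\[
    \widehat{\underline{\mathrm{Hom}}}(a, \varepsilon_B) \circ \eta_{\widehat{\underline{\mathrm{Hom}}}(a,B)} \;=\; \id_{\widehat{\underline{\mathrm{Hom}}}(a,B)}
\]
for $B \in \Freyd( \AC )$ is handled by the same strategy: both sides are endomorphisms of the right exact functor $\widehat{\underline{\mathrm{Hom}}}(a,-)$ in $\Homlincok(\Freyd( \AC ), \Freyd( \AC ))$, hence coincide iff their restrictions to $\AC$ do, which is precisely the commutativity of \Cref{diag:IntHomII} restricted to $\AC$. The only subtle point in the whole argument is ensuring that the source and target of each triangle composition are themselves right exact endofunctors of $\Freyd( \AC )$, so that \Cref{theorem:2_cat_universal_freyd_mult} applies; no more delicate argument is required here, because the contravariant argument $a$ is held fixed throughout, sidestepping the bifunctoriality issues that would surface if one tried to promote $\underline{\mathrm{Hom}}$ to a full internal hom bifunctor.
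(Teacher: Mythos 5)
Your proposal is correct and follows essentially the same route as the paper: the paper's proof likewise takes $\PreCoevhat_{-,a}$ and $\PreEvhat_{a,-}$ as unit and counit and observes that the triangle identities \Cref{diag:IntHomI} and \Cref{diag:IntHomII} hold if and only if their restrictions to $\AC$ hold, which is exactly what \Cref{Def:PreInternalHom} supplies. You merely make explicit the step the paper leaves implicit, namely that both sides of each triangle identity are endomorphisms of a right exact functor in $\Homlincok(\Freyd(\AC),\Freyd(\AC))$, so that the faithfulness part of \Cref{theorem:2_cat_universal_freyd_mult} applies.
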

\begin{proof}
The unit and counit of the desired adjunction are given by
$\PreCoevhat_{-,a}$ and $\PreEvhat_{a,-}$,
since they satisfy \Cref{diag:IntHomI} and \Cref{diag:IntHomII}
if and only if the restrictions of these equations are satisfied.
\end{proof}

Thus, an \fp prointernal hom structure yields right adjoints for tensoring with objects $a \in \AC$.
The next theorem provides a sufficient condition for the existence of right adjoints for tensoring with
an arbitrary object $A \in \Freyd( \AC )$.

\begin{theorem}\label{theorem:homA}
Let $A = (a \xleftarrow{\rho_a} r_a) \in \Freyd( \AC )$, and suppose we are given right adjoints
\[
\left(- \oF a\right) \dashv \widehat{\mathrm{\underline{Hom}} ( a, - )}
\hspace{3em} \text{and} \hspace{3em}
\left(- \oF r_a\right) \dashv \widehat{\mathrm{\underline{Hom}} ( r_a, - )} \, . \]
If $\AC$ has weak kernels, then $\left(- \oF A\right)$ has a right adjoint,
which we denote by $\widehat{\mathrm{\underline{Hom}}} ( A, - )$:
\[ \left(- \oF A\right) \dashv \widehat{\mathrm{\underline{Hom}}} ( A, - ) \, . \]
\end{theorem}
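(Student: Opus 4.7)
The plan is to exploit the cokernel presentation $\emb(r_a) \xrightarrow{\{\rho_a\}} \emb(a) \to A \to 0$ of $A$ in $\Freyd(\AC)$, which follows directly from \Cref{construction:cokernels_in_freyd}, and to combine the right exactness of $\oF$ with the two given adjunctions in order to realize $\widehat{\underline{\mathrm{Hom}}}(A, -)$ as a kernel built out of $\widehat{\underline{\mathrm{Hom}}}(a, -)$ and $\widehat{\underline{\mathrm{Hom}}}(r_a, -)$.

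First, I would construct a natural morphism
\[
\alpha_Y : \widehat{\underline{\mathrm{Hom}}}(a, Y) \longrightarrow \widehat{\underline{\mathrm{Hom}}}(r_a, Y)
\]
from $\rho_a$ by transposing $\PreEvhat_{a, Y} \circ \bigl(\id_{\widehat{\underline{\mathrm{Hom}}}(a, Y)} \oF \{\rho_a\}\bigr)$ across the adjunction $(- \oF r_a) \dashv \widehat{\underline{\mathrm{Hom}}}(r_a, -)$. By naturality of the adjunction units and counits, postcomposition with $\alpha_Y$ corresponds, under $\Hom(X \oF a, Y) \cong \Hom(X, \widehat{\underline{\mathrm{Hom}}}(a, Y))$, to precomposition with $X \oF \{\rho_a\}$. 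Since $\oF$ is right exact in its first argument (\Cref{lemma:extension_commutes_with_cokernels_mult}) and $\Hom(-, Y)$ is left exact, applying $\Hom(-, Y)$ to $X \oF r_a \to X \oF a \to X \oF A \to 0$ and then transposing via the adjunctions yields for every $X, Y$ a left exact sequence
\[
0 \to \Hom(X \oF A, Y) \to \Hom(X, \widehat{\underline{\mathrm{Hom}}}(a, Y)) \xrightarrow{(\alpha_Y)_*} \Hom(X, \widehat{\underline{\mathrm{Hom}}}(r_a, Y)) .
\]

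Next, since $\AC$ admits weak kernels, \Cref{theorem:freyd} guarantees that $\Freyd(\AC)$ is abelian and in particular has all kernels, so I can set
\[
\widehat{\underline{\mathrm{Hom}}}(A, Y) := \kernel(\alpha_Y) ,
\]
which is functorial in $Y$ from the functoriality of the kernel together with naturality of $\alpha_Y$ in $Y$. Because $\Hom(X, -)$ preserves kernels, the exact sequence above yields $\Hom(X \oF A, Y) \cong \Hom(X, \widehat{\underline{\mathrm{Hom}}}(A, Y))$ naturally in $X$ and $Y$, establishing the adjunction $(- \oF A) \dashv \widehat{\underline{\mathrm{Hom}}}(A, -)$.

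The main obstacle is mostly bookkeeping: one has to check that $\alpha_Y$ is well-defined and natural in $Y$, and that the composite bijection on $\Hom$-sets is natural in both $X$ and $Y$. All of this follows formally from naturality of the given adjunction units/counits, together with the Yoneda lemma, so no substantial calculation is required. The genuinely indispensable ingredient is the hypothesis that $\AC$ has weak kernels, since without it the kernel defining $\widehat{\underline{\mathrm{Hom}}}(A, Y)$ need not exist in $\Freyd(\AC)$.
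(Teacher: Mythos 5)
Your proposal is correct and follows essentially the same three-step route as the paper: construct the natural morphism $\widehat{\underline{\mathrm{Hom}}}(a,-)\rightarrow\widehat{\underline{\mathrm{Hom}}}(r_a,-)$ induced by $\rho_a$, define $\widehat{\underline{\mathrm{Hom}}}(A,-)$ as its kernel (available since weak kernels in $\AC$ make $\Freyd(\AC)$ abelian), and verify the adjunction via the left-exact sequence obtained from $A\simeq\cokernel(\rho_a)$, right exactness of $\oF$, and left exactness of $\Hom$. The only cosmetic difference is that you build the comparison morphism explicitly as the adjoint transpose of the counit precomposed with $\id\oF\{\rho_a\}$, whereas the paper extracts the same morphism from the commuting square of $\Hom$-sets via the Yoneda lemma.
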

\begin{proof}
      We proceed in three steps.
      First, we define a natural morphism
      \[
      \widehat{\mathrm{\underline{Hom}} ( a, - )} \xrightarrow{\widehat{\mathrm{\underline{Hom}}} ( \rho_a, - )} \widehat{\mathrm{\underline{Hom}}} ( r_a, - ) \, ,
      \]
      second, we show that setting
      \[
            \widehat{\mathrm{\underline{Hom}}} ( A, C ) := \kernel( \widehat{\mathrm{\underline{Hom}}} ( \rho_a, C )
      \]
      for $C \in \Freyd( \AC )$
      gives rise to a well-defined functor, and third, we prove the desired adjunction.
      
      For the first step, we take a look at the following diagram
      \begin{center}
            \begin{tikzpicture}[label/.style={postaction={
            decorate,
            decoration={markings, mark=at position .5 with \node #1;}},
            mylabel/.style={thick, draw=none, align=center, minimum width=0.5cm, minimum height=0.5cm,fill=white}}]
            \coordinate (r) at (9,0);
            \coordinate (d) at (0,-2);
            \node (A) {$\Hom( B, \widehat{\mathrm{\underline{Hom}}} ( a, C ) )$};
            \node (B) at ($(A) + (r)$) {$\Hom( B, \widehat{\mathrm{\underline{Hom}}} ( r_a, C ) )$};
            \node (C) at ($(A)+(d)$) {$\Hom( B \oF a, C )$};
            \node (D) at ($(B)+(d)$) {$\Hom( B \oF r_a, C )$};
            
            \draw[->,thick, dashed] (A) -- (B);
            \draw[->,thick] (C) --node[above]{$\Hom( B \oF \rho_a, C )$} (D);
            \draw[->,thick] (A) --node[above,rotate=90]{$\sim$} (C);
            \draw[->,thick] (D) --node[below,rotate=90]{$\sim$} (B);
            \end{tikzpicture}
        \end{center}
        for all $B, C \in \Freyd( \AC )$.
        The dashed arrow that renders this diagram commutative is uniquely determined.
        Furthermore, by Yoneda's lemma, it is of the form
        \[
              \Hom( B, \widehat{\mathrm{\underline{Hom}}} ( \rho_a, C ) )
        \]
        for some morphism $\widehat{\mathrm{\underline{Hom}}} ( \rho_a, C ) \colon \widehat{\mathrm{\underline{Hom}}} ( a, C ) \rightarrow \widehat{\mathrm{\underline{Hom}}} ( r_a, C )$,
        and its naturality in $C \in \Freyd( \AC )$ is implied by the naturality of the morphisms in the above diagram.
        
        For the second step, we note that $\AC$ having weak kernels implies $\Freyd( \AC )$ being abelian by \Cref{theorem:freyd},
        so in particular, it has kernels.
        Moreover, by the naturality of $\widehat{\mathrm{\underline{Hom}}} ( \rho_a, - )$,
        the operation $\kernel( \widehat{\mathrm{\underline{Hom}}} ( \rho_a, C ) )$
        is functorial.
        
        Last, we prove adjointness by the following chain of morphisms
        in which each step is natural in $B, C \in \Freyd( \AC )$:
        \begin{align*}
            \Hom( B \oF A, C ) &\simeq \Hom( B \oF \cokernel( \rho_a ), C ) & \text{($A \simeq \cokernel( \rho_a)$)} \\
            &\simeq \Hom( \cokernel( B \oF \rho_a ), C ) & \text{($\oF$ is right exact)} \\
            &\simeq \kernel (\Hom( B \oF \rho_a, C ) )  & \text{($\Hom$ is left exact)}\\
            &\simeq \kernel (\Hom( B , \widehat{\mathrm{\underline{Hom}}} ( \rho_a, C ) ) ) & \text{(Definition of $\widehat{\mathrm{\underline{Hom}}} ( \rho_a, - )$)}\\
            &\simeq  \Hom( B , \kernel(\widehat{\mathrm{\underline{Hom}}} ( \rho_a, C ) ) ) & \text{($\Hom$ is left exact)}\\
            &\simeq  \Hom( B , \widehat{\mathrm{\underline{Hom}}} ( A, C ) ) ) & \text{(Definition of $\widehat{\mathrm{\underline{Hom}}} ( A, - )$)} & \qedhere
        \end{align*}
\end{proof}

\section{Connection with Day convolution} \label{subsec:ConnectionToDay}
In \cite{Day70}, Day analyzes closed monoidal structures on 
functor categories in the context of enriched category theory.
In this section, we compare Day's findings to 
the tensor products on Freyd categories described in this paper,
since Freyd categories can be seen as subcategories of functor categories, see \Cref{theorem:freyd_as_fp_functors}.

\subsection{Introduction to Day convolution}\label{subsection:intro_day}
We start by shortly describing the theory of so-called Day convolutions in
the additive context, \ie, in the context of $\Ab$-enriched category theory.

Let $\AC$ be a small additive category.
For every given multilinear functor 
\[P: \AC^{\op} \times \AC^{\op} \times \AC \rightarrow \Ab \, ,\]
the \textbf{Day convolution} of two functors 
$F, G: \AC \rightarrow \Ab$
w.r.t.\ $P$ is given by the following coend\footnote{For a short introduction to the calculus of (co)ends, see, \eg, \cite{MLCWM}}:
\[ F \ast_{P} G \coloneqq \int^{ x,y \in \AC }{P(x,y,-) \otimes_{\Z} F( x ) \otimes_{\Z} G( y )} \, . \]
Since $\AC$ is small and $\Ab$ cocomplete,
this coend exists and defines a bilinear functor
\[
      - \ast_{P} -: \Hom( \AC, \Ab ) \times \Hom( \AC, \Ab ) \rightarrow \Hom( \AC, \Ab ) \, .
\]
Fixing the right component, the resulting univariate functor $(- \ast_{P} G)$ always admits a right adjoint
$(-/G)$, given by the formula
\[
      (F/G) :=  \int_{x \in \AC}{\Hom_{\Z}\left( \int^{ y \in \AC }{ Gy \otimes_{\Z} P(-,y,x ) }, Fx \right)} \, .
\]
Similarly, fixing the left component, the resulting univariate functor $(F \ast_{P} -)$ always admits a right adjoint
$(F\backslash-)$, given by the formula
\[
      (F\backslash G) :=  \int_{x \in \AC}{\Hom_{\Z}\left( \int^{ y \in \AC }{ Fy \otimes_{\Z} P(y,-,x ) }, Gx \right)} \, .
\]
These assertions are validated by the computations in the proof of Theorem $3.3$ in \cite{Day70}.

Since left adjoints commute with arbitrary colimits,
we may conclude that $(- \ast_{P} -)$ commutes in particular componentwise with cokernels.
Moreover, if we evaluate $(- \ast_{P} -)$ at representables $h^a := \Hom_{\AC}( a, - )$ and
$h^b := \Hom_{\AC}( b, -)$ for $a,b \in \AC$, we get
\begin{align*}
      h^a \ast_{P} h^b &\simeq
      \int^{ x,y \in \AC }{P(x,y,-) \otimes_{\Z} h^a( x ) \otimes_{\Z} h^b( y )} \\
      &\simeq
      \int^{ x \in \AC }{  h^a( x )\otimes_{\Z} \left( \int^{ y \in \AC }{P(x,y,-) \otimes_{\Z} h^b( y ) } \right) } \\
      &\simeq
      \int^{ x \in \AC }{  h^a( x )\otimes_{\Z} P(x,b,-) } \\
      &\simeq
      P(a,b,-) \, ,
\end{align*}
where we used the expansion of a double integral to an iterated integral and the co-Yoneda lemma\footnote{
      The co-Yoneda lemma states that any functor $F: \AC^{\op} \rightarrow \Ab$
      is equivalent to the coend $\int^{ x \in \AC }{  \Hom_{\AC}(-,x)\otimes_{\Z} F(x) }$,
      \ie, every presheaf is a particular colimit of representables.
}.
In other words, the functor $P$ prescribes the values of $(- \ast_{P} -)$
on representables up to natural isomorphism.

\subsection{Day convolution of finitely presented functors}

We are now ready to
formulate the link between Day convolution and tensor products on Freyd categories.
Let $T: \AC^{\op} \times \AC^{\op} \rightarrow \Freyd( \AC^{\op} )$
be a bilinear functor.
The functor $\oF$ induced by the universal property of Freyd categories
gives rise to a functor
\[
      \fpf( \AC, \Ab) \times \fpf( \AC, \Ab) \rightarrow \Hom( \AC, \Ab)
\]
where we use the equivalence $\fpf( \AC, \Ab) \simeq \Freyd( \AC^{\op} )$
and then postcompose with the inclusion $\fpf( \AC, \Ab) \hookrightarrow \Hom( \AC, \Ab)$.
By abuse of notation, we denote this functor again by $\oF$.
Moreover, $T$ also defines a functor
\[
      P: \AC^{\op} \times \AC^{\op} \times \AC \rightarrow \Ab: (a,b,c) \mapsto T(a,b)(c) \, ,
\]
where we interpret $T(a,b)$ as an object in $\fpf( \AC, \Ab)$ to which we may apply $c \in \AC$.
\begin{theorem}\label{theorem:restriction_day}
      The functor $(- \oF -)$ is equivalent to the restriction of $(-\ast_{P}-)$
      to finitely presented functors, \ie,
      the following diagram commutes up to natural isomorphism:
      \begin{center}
      \begin{tikzpicture}[baseline=(current bounding box.center),label/.style={postaction={
                decorate,
                decoration={markings, mark=at position .5 with \node #1;}},
                mylabel/.style={thick, draw=black, align=center, minimum width=0.5cm, minimum height=0.5cm,fill=white}}]
      
                \def\w{6};
                \def\h{3};
      
                \node (A) at (0,0) {$\mathrm{Hom} ( \AC, \Ab ) \times \mathrm{Hom} ( \AC, \Ab )$};
                \node (B) at (\w,0) {$\mathrm{Hom} ( \AC, \Ab )$};
                \node (C) at (0,-\h) {$\mathrm{fp} ( \AC, \Ab ) \times \mathrm{fp} ( \AC, \Ab )$};
                \node (D) at (\w,-\h) {$\mathrm{fp} ( \AC, \Ab )$};
      
                \draw[->,thick] (A) --node[above] {$\ast_{P}$} (B);
                \draw[right hook ->,thick] (C) -- (A);
                \draw[right hook ->,thick] (D) -- (B);
                \draw[->,thick] (C) --node[above] {$\oF$} (D);
      
                \node at (0.5*\w,-0.5*\h) {$\circlearrowleft$};
      
      \end{tikzpicture}
      \end{center}
      \end{theorem}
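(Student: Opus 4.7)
The plan is to invoke the multilinear universal property of Freyd categories (\Cref{theorem:2_cat_universal_freyd_mult}) to identify the two bilinear functors, after first showing that Day convolution of finitely presented functors is again finitely presented.

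First, I would verify that $(- \ast_{P} -)$ actually restricts to a bilinear right exact functor $\fpf(\AC, \Ab)^{2} \to \fpf(\AC, \Ab)$. The coend computation recalled in \Cref{subsection:intro_day} gives $h^{a} \ast_{P} h^{b} \simeq P(a,b,-) = T(a,b)(-)$ naturally in $a, b \in \AC$, and the right-hand side is finitely presented since it is the image of $T(a,b) \in \Freyd(\AC^{\op})$ under the equivalence of \Cref{theorem:freyd_as_fp_functors}. Since every finitely presented functor is a cokernel of a morphism between representables and $(- \ast_{P} -)$ is componentwise right exact (being a left adjoint in each argument), an iterated cokernel argument shows that $F \ast_{P} G \in \fpf(\AC, \Ab)$ for $F, G \in \fpf(\AC, \Ab)$, and the restriction inherits bilinearity and right exactness.

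Next, transport everything through the equivalence $\fpf(\AC, \Ab) \simeq \Freyd(\AC^{\op})$ so that both $(- \oF -)$ and the restriction of $(- \ast_{P} -)$ become bilinear right exact endofunctors on $\Freyd(\AC^{\op})^{2}$. By \Cref{theorem:2_cat_universal_freyd_mult}, any such functor is determined up to natural isomorphism by its precomposition with the componentwise embedding $\emb \colon (\AC^{\op})^{2} \hookrightarrow \Freyd(\AC^{\op})^{2}$. I would therefore compare the two restrictions: on the $\oF$-side, \Cref{lemma:natural_isos} yields $\emb(a) \oF \emb(b) \simeq T(a,b)$, naturally in $a, b$; on the Day side, the coend computation above yields $h^{a} \ast_{P} h^{b} \simeq T(a,b)(-)$, naturally in $a, b$, which under the equivalence is exactly $T(a,b)$. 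So both restrictions agree with $T$, and \Cref{theorem:2_cat_universal_freyd_mult} promotes this to a natural isomorphism $(- \oF -) \simeq (- \ast_{P} -)|_{\fpf}$ on all of $\Freyd(\AC^{\op})^{2}$, which is exactly the asserted commutativity.

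The main obstacle I anticipate is bookkeeping the naturality and transport of structure: one has to verify that the coend isomorphism $h^{a} \ast_{P} h^{b} \simeq T(a,b)(-)$ is natural in both $a$ and $b$ simultaneously (requiring care with the Fubini-style interchange used in \Cref{subsection:intro_day}), and that this natural isomorphism, once transported through $\fpf(\AC, \Ab) \simeq \Freyd(\AC^{\op})$, is compatible with the embedding $\emb$ and with the identification $\widehat{T} \circ \emb \simeq T$. Once those naturalities are in place, \Cref{theorem:2_cat_universal_freyd_mult} performs the rest of the work without any further computation.
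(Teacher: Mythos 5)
Your proposal is correct and follows the same strategy as the paper: compare the two bilinear right exact functors via \Cref{theorem:2_cat_universal_freyd_mult} and check that they agree after restriction along the Yoneda embedding, where both reduce to $T(a,b)$ by the coend computation $h^a \ast_P h^b \simeq P(a,b,-)$. The only difference is your preliminary step showing that $\ast_P$ preserves finite presentability; the paper sidesteps this by taking the common codomain of both composites in the square to be $\Hom(\AC,\Ab)$ (which has cokernels, so the universal property applies with $\BC = \Hom(\AC,\Ab)$), and the fact that $F \ast_P G$ is finitely presented for finitely presented $F,G$ then falls out as a corollary rather than being needed as a prerequisite.
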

      
      \begin{proof}
            By Theorem \ref{theorem:2_cat_universal_freyd_mult},
            we have an equivalence of categories
      \begin{align*}
            \Homlin ( ( \AC^{\op}, \AC^{\op} ), \Hom( \AC, \Ab ) ) &\simeq \Homlincok \left( (\Freyd( \AC^{\op} ), \Freyd( \AC^{\op}) ), \Hom( \AC, \Ab ) \right) \\
            &\simeq \Homlincok \left( (\mathrm{fp} ( \AC, \Ab ), \mathrm{fp} ( \AC, \Ab ) ), \Hom( \AC, \Ab ) \right) \, .
      \end{align*}
      Since both paths in the given diagram are bilinear functors that commute with cokernels,
      they are naturally isomorphic
      if and only if their restrictions along the Yoneda embedding 
      \[
            \AC^{\op} \hookrightarrow \mathrm{fp} ( \AC, \Ab ) )
      \]
      are naturally isomorphic.
      But this is the case since 
      \[
            h^a \ast_P h^b \simeq P(a,b,-) \simeq T(a,b)(-) \simeq h^a \oF h^b
      \]
      as we have argued in Subsection \ref{subsection:intro_day}.
      \end{proof}
      
\begin{remark}
      In order to obtain a monoidal structure on
      $\Hom( \AC, \Ab )$, Day defines in \cite{Day70}
      premonoidal categories, a concept that he later renamed as
      \textbf{promonoidal categories} in \cite{Day74}, where he also reformulated these categories in the language of profunctors.
      A promonoidal category $\AC$ comes equipped with a multilinear functor 
      $P: \AC^{\op} \times \AC^{\op} \times \AC \rightarrow \Ab$,
      that, as we have seen, gives rise to a tensor product $(- \ast_{P} -)$ on $\Hom( \AC, \Ab )$.
      In order to have an associator for this tensor product,
      Day moreover requires a promonoidal category to be equipped with
      an isomorphism
      \[
            \int^{x \in \AC}{P(a,b,x) \otimes_{\Z} P(x,c,d)} \stackrel{\sim}{\longrightarrow}
            \int^{x \in \AC}{P(b,c,x) \otimes_{\Z} P(a,x,d)}
      \]
      natural in $a,b,c \in \AC^{\op}$ and $d \in \AC$
      satisfying a certain coherence relation
      that ensures this datum to correspond to an associator of $(- \ast_{P} -)$.
      We may rewrite the left hand side of this natural isomorphism as
      \begin{align*}
            \int^{x \in \AC}{P(a,b,x) \otimes_{\Z} P(x,c,d)}
            &\simeq \int^{x \in \AC}{P(a,b,x) \otimes_{\Z} \left(\int^{y \in \AC} h^c(y) \otimes_{\Z} P(x,y,d) \right)} \\
            &\simeq \int^{x,y \in \AC}{P(x,y,d) \otimes_{\Z} P(a,b,x) \otimes_{\Z} h^c(y) } \\
            &\simeq \left(P(a,b,-) \ast_{P} h^c\right)(d) \\
            &\simeq \left((h^a \ast_{P} h^b) \ast_{P} h^c\right)(d)
      \end{align*}
      and similarly for the right hand side, we get
      \begin{align*}
            \int^{x \in \AC}{P(b,c,x) \otimes_{\Z} P(a,x,d)}
            \simeq \left(h^a \ast_{P} (h^b \ast_{P} h^c)\right)(d) \, .
      \end{align*}
      In other words, this additional datum can be interpreted as the restriction of an 
      associator of $\ast_{P}$ to its values on representables,
      which is exactly the idea behind the additional data for tensor products in Freyd categories that we gave in Subsection \ref{subsection:defining_pre_structures}.
      Analogously, there are also additional data within the definition of a promonoidal category 
      which can be interpreted as unitors restricted to representables.
\end{remark}

\section{Applications and examples}

\subsection{Examples in the category of finitely presented modules}

For any ring $R$, we denote by $R\fpmodl$ the category of finitely presented left $R$-modules.
Its full subcategory generated by the free modules $R^{1 \times n}$, $n \in \N_0$
is denoted by $\Rows_R$. Note that we have $\Freyd( \Rows_R ) \simeq R\fpmodl$ (see \cite{PosFreyd}).

\begin{example}[The necessity of prostructures]
      Let $R$ be a commutative ring. Any finitely presented $R$-module $M$ 
      gives rise to a right exact bilinear functor
      \[
            T: R\fpmodl \times R\fpmodl \rightarrow R\fpmodl: (A,B) \mapsto A \otimes_R M \otimes_R B \, .
      \]
      Moreover, the associator of $\otimes_R$ defines an associator of $T$
      due to Mac Lane's coherence theorem \cite{MLCWM}.
      In other words, $R\fpmodl$ gets the structure of a semimonoidal
      category with $T$ as its tensor product.
      The restriction of the defining semimonoidal data to the subcategory $\Rows_R$
      defines a prosemimonoidal structure on $\Rows_R$ 
      (where we use the identification $\Freyd( \Rows_R ) \simeq R\fpmodl$)
      whose tensor product of two objects in $\Rows_R$
      lies outside of $\Rows_R$
      whenever $M$ is not a row module.
      
      Moreover, whenever we have a right adjoint $\IHom(A,-)$ of $(-\otimes_R A)$
      in $R\fpmodl$, \eg, when $R$ is a coherent ring, then we can calculate
      \begin{align*}
            \Hom_R( T(A,B), C ) &\simeq \Hom_R( A \otimes_R M \otimes_R B, C ) \\
            &\simeq \Hom_R\left( A , \IHom( M \otimes_R B, C ) \right)
      \end{align*}
      and see that $\IHom( M \otimes_R B, - )$ is right adjoint to $T(-,B)$.
      Restricting this right adjoint functor to $\Rows_R$ provides an example
      of a prointernal hom functor that in general does not map to $\Rows_R$.
\end{example}

\begin{example}[Internal homs cannot always be extended]
      We give an example of a closed monoidal category $\AC$
      such that the induced monoidal structure on $\Freyd( \AC )$ does not have an internal hom.
      By \Cref{theorem:homA}, it is necessary that such an $\AC$ must not have weak kernels.
      To this end, we start by looking at the ring
      \[ R = \mathbb{Q} \left[ x_i, z \, | \, i \in \mathbb{N} \right] / \langle x_i \cdot z \, | \, i \in \mathbb{N} \rangle \, . \]
      Now consider the $R$-module morphism $\alpha \colon R \xrightarrow{z} R$. 
      It is easy to see that its kernel is generated by the classes of all $x_i$, $i \in \N$
      and that this is not an \fp object. 
      Hence, the ring $R$ is not coherent. Consequently, the additive category $\AC = \Rows_R$ does not admit weak kernels either. 
      However, it is a monoidal, symmetric and closed category by the usual tensor product $\otimes_R$ of $R$-modules.
      
      We will now prove that the induced tensor product on the Freyd category $\Freyd( \AC )$,
      which can be identified with the usual tensor product of $R$-modules in $R\fpmodl$, does not admit a right adjoint. To see this, we first note
      that the $R$-module
      \begin{equation}
      \mathrm{Hom}_R \left( R / \langle z \rangle, R \right) = \left\{ 
      \begin{tikzpicture}[baseline=(current bounding box.center)]
      \def\w{2.3};
      \def\h{1.5};
      \node (A) at (0,0) {$R$};
      \node (B) at (0,-\h) {$R$};
      \node (C) at (\w,0) {$0$};
      \node (D) at (\w,-\h) {$R$};
      \draw[->,thick] (A) --node[left]{$z$} (B);
      \draw[->,thick] (B) -- node[above]{$\alpha$} (D);
      \draw[->,thick] (C) -- (D);
      \draw[->,thick,dashed] (A) -- (C);
      \end{tikzpicture} \, , \, \text{s.\,t. } z \cdot \alpha = 0 \right\} \cong \left\langle \left\{ x_i \, | \, i \in \mathbb{N} \right\} \right\rangle \label{equ:HomNotFinitelyPresented}
      \end{equation}
      is not finitely presented.
      Now assume, that there was an internal hom functor $\underline{\mathrm{Hom}}_R$ on $R\fpmodl$, so in particular
      \[ \underline{\mathrm{Hom}}_R \left( R / \langle z \rangle , R \right) = \cokernel \left( R^{1 \times a} \xleftarrow{M} R^{1 \times b} \right) \]
      for some matrix $M \in R^{b \times a}$.
      Then, by the tensor-hom-adjunction, we would have $R$-module morphisms
      \begin{align*}
            \mathrm{Hom}_R \left( R / \langle z \rangle, R \right) &\cong \mathrm{Hom}_R \left( 1, \underline{\mathrm{Hom}}_R \left( R / \langle z \rangle, R \right) \right) \\
            &\cong \mathrm{Hom}_R \left( R, \cokernel \left( R^{1 \times a} \xleftarrow{M} R^{1 \times b} \right) \right) \cong \cokernel \left( R^{1 \times a} \xleftarrow{M} R^{1 \times b} \right) \, ,
      \end{align*}
      a contradiction to \Cref{equ:HomNotFinitelyPresented}.
\end{example}

\subsection{Monoidal structures of iterated Freyd categories}

Let $\AC$ be an additive category.
An \textbf{iterated Freyd category of $\AC$} is a category contained in the following
inductively defined set:
\begin{enumerate}
      \item $\AC$ is an iterated Freyd category,
      \item if $\XC$ is an iterated Freyd category, then so are $\XC^{\op}$ and $\Freyd( \XC )$.
\end{enumerate}

\begin{theorem}
      If $\AC$ is an additive closed monoidal category with weak kernels and weak cokernels,
      then so are all iterated Freyd categories of $\AC$.
\end{theorem}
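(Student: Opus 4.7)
The plan is structural induction on the construction of an iterated Freyd category of $\AC$, verifying that all four properties — additive, closed monoidal, weak kernels, weak cokernels — are preserved by each of the two generating operations $(-)^{\op}$ and $\Freyd(-)$. The base case is the hypothesis on $\AC$.

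For the Freyd step, assume $\XC$ is an iterated Freyd category satisfying all four properties. Since $\XC$ has weak kernels, \Cref{theorem:freyd} implies that $\Freyd(\XC)$ is abelian; hence it is additive and has kernels and cokernels, which are a fortiori weak kernels and weak cokernels. The closed monoidal structure of $\XC$ yields an \fp prointernal hom structure on $\XC$ in the sense of \Cref{Def:PreInternalHom}: one takes the tensor of $\XC$ composed with the embedding $\emb \colon \XC \hookrightarrow \Freyd(\XC)$ from \Cref{remark:implicit_emb} as the \fp protensor product, reuses the associator and unitors of $\XC$, and uses the internal hom of $\XC$ directly as the \fp prointernal hom. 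The constructions of \Cref{subsec:LiftsOfPreMonoidalStructures} then lift these data to a right exact monoidal structure on $\Freyd(\XC)$, and \Cref{theorem:homA} — applied with the weak kernels of $\XC$ and the right adjoints of $-\oF a$ supplied by the closedness of $\XC$ — furnishes right adjoints of $-\oF A$ for every $A \in \Freyd(\XC)$, which gives the closed monoidal structure.

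For the opposite step, assume the four properties hold for $\XC$; we verify them for $\XC^{\op}$. Additivity is immediate, and weak kernels in $\XC^{\op}$ are weak cokernels in $\XC$ and conversely, so both exist thanks to the symmetric hypothesis. The monoidal structure transports to $\XC^{\op}$ by formal reversal of arrows, with tensor product, associator and unitors carried over. The hardest part, which I expect to be the main obstacle, is verifying closedness of $\XC^{\op}$: a right adjoint of the tensor in $\XC^{\op}$ corresponds to a left adjoint of the tensor in $\XC$, and this is not provided by closedness of $\XC$ alone. My approach is to interleave the two inductive steps: after a single application of $\Freyd(-)$ we are inside an abelian category, where kernels and cokernels behave symmetrically under duality; combining this symmetry with the joint hypothesis that both weak kernels and weak cokernels are present ensures that $\XC^{\op}$ meets the hypotheses of \Cref{theorem:freyd}, so that one can construct the closed structure on $\Freyd(\XC^{\op})$ via \Cref{theorem:homA} and then transfer it back along the embedding. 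The fact that the inductive hypothesis is symmetric in kernels and cokernels is exactly what makes this interleaving close up, and constitutes the decisive point of the argument.
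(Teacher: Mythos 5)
Your handling of the $\Freyd(-)$ step is correct and essentially identical to the paper's: abelianness of $\Freyd(\XC)$ from \Cref{theorem:freyd}, and the closed monoidal structure obtained by reading the structure of $\XC$ (postcomposed with $\emb$) as an \fp promonoidal and prointernal hom structure, then lifting via \Cref{subsec:LiftsOfPreMonoidalStructures} and \Cref{theorem:homA}. The gap is in the $(-)^{\op}$ step. You are right that closedness is the delicate point there --- the paper disposes of this step in a single sentence, simply asserting that a closed monoidal category with weak kernels and weak cokernels dualizes to one --- but your proposed repair does not work. The ``interleaving'' argument is circular: \Cref{theorem:homA} takes as \emph{input} the adjunctions $(-\oF a)\dashv\widehat{\underline{\mathrm{Hom}}}(a,-)$ for objects $a$ of the base category, so to apply it to $\Freyd(\XC^{\op})$ you must already possess internal homs for $\XC^{\op}$ valued in $\Freyd(\XC^{\op})$, which is, up to passing to the Freyd category, exactly the closedness of $\XC^{\op}$ you are trying to establish. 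Nor can a closed structure on $\Freyd(\XC^{\op})$, if you had one, be ``transferred back along the embedding'': $\XC^{\op}$ is not a reflective subcategory of $\Freyd(\XC^{\op})$, and there is no reason the internal hom of two of its objects should again lie in it. Finally, the induction must also handle $\AC^{\op}$ itself, where no application of $\Freyd(-)$ has yet occurred, so an interleaved induction does not even reach all the categories generated by the inductive definition.

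The underlying issue is that a right adjoint of $-\oF b$ in $\XC^{\op}$ is a \emph{left} adjoint of $-\oF b$ in $\XC$, and neither closedness of $\XC$ (which supplies right adjoints) nor abelianness nor the presence of weak kernels and weak cokernels manufactures such a left adjoint; for instance $-\otimes_R b$ on finitely presented modules over a noetherian ring has a right adjoint but in general no left adjoint. If ``closed'' is read so that the structure is self-dual by fiat (adjoints of tensoring on both sides of the adjunction are part of the data), the $(-)^{\op}$ step becomes immediate and your worry dissolves; with the notion of internal hom actually set up in \Cref{Def:InternalHom} (right adjoints of $-\oF B$ only), the step requires an argument that neither your proposal nor the paper's proof supplies.
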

\begin{proof}[Proof by induction]
The initial case $\AC$ holds by assumption.
      Moreover, if $\XC$ is a closed monoidal category
      with weak kernels and weak cokernels, then the same applies
      to $\XC^{\op}$.
      Next, $\Freyd( \XC )$ always has cokernels, and it has kernels iff $\XC$ has weak kernels.
      Last, we can lift the monoidal closed structure from $\XC$ to $\Freyd( \XC )$ by the results presented in Subsection \ref{subsec:LiftsOfPreMonoidalStructures}.
\end{proof}

\subsection{Free abelian categories}

Let $\AC$ be an additive category.
In \cite[Theorem 6.1]{BelFredCats},
it is shown that we have an equivalence of iterated Freyd categories
\[
      \Freyd( \Freyd( \AC^{\op} )^{\op} ) \simeq \Freyd( \Freyd( \AC )^{\op} )^{\op}
\]
and that this category is actually the \textbf{free abelian category} of $\AC$.
In other words, the canonical full and faithful functors
\[
      \AC \longrightarrow \Freyd( \Freyd( \AC^{\op} )^{\op} ) \, , \hspace{3em} \AC \longrightarrow \Freyd( \Freyd( \AC )^{\op} )^{\op}
\]
are both universal functors of $\AC$ into an abelian category.
Thus, the theory of tensor products for Freyd categories also yields 
some tensor products on free abelian categories.
A nice description of free abelian categories is given by Adelman in \cite{Adelman}.

Free abelian categories are of interest in various parts of mathematics,
\eg, for dealing with so-called pp-pairs in model theory \cite{PrestPSL}.
In \cite{TensorNori}, it is shown how to lift monoidal structures on $\AC$
to free abelian categories as one step of introducing tensor products of Nori motives.
Our findings in \Cref{sec:FPMonoidalStructuresOnFreyd}
yield the following result.
\begin{theorem}
      Every promonoidal structure on $\Freyd( \AC^{\op} )^{\op}$
      extends to a monoidal structure 
      on the free abelian category of $\AC$
      whose tensor product is componentwise right exact.
\end{theorem}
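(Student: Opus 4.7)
The plan is to apply the lifting machinery developed in \Cref{subsec:LiftsOfPreMonoidalStructures} to the additive category $\BC := \Freyd(\AC^{\op})^{\op}$. By the Beligiannis equivalence recalled just above, $\Freyd(\BC) = \Freyd(\Freyd(\AC^{\op})^{\op})$ is precisely the free abelian category of $\AC$, so extending a promonoidal structure on $\BC$ to a monoidal structure on $\Freyd(\BC)$ is exactly what the results of \Cref{sec:FPMonoidalStructuresOnFreyd} were designed to do.

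First I would unpack the given (\fp) promonoidal structure on $\BC$ in the sense of \Cref{def:preMonoidalStructure}: it consists of a bilinear protensor product $T \colon \BC \times \BC \to \Freyd(\BC)$, a protensor unit $1 \in \Freyd(\BC)$, a proassociator $\Ass$, and prounitors $\LU, \RU$, all subject to the restricted pentagon and triangle identities. Applying the multilinear universal property (\Cref{theorem:2_cat_universal_freyd_mult}) I extend $T$ to a bilinear functor $\oF \colon \Freyd(\BC) \times \Freyd(\BC) \to \Freyd(\BC)$, which by \Cref{lemma:extension_commutes_with_cokernels_mult} is componentwise right exact. The proassociator and prounitors lift via \Cref{construction:extension_of_multi_nat} to natural isomorphisms $\Asshat, \LUhat, \RUhat$, and by the lemmas in Subsubsections \ref{subsubsection:associators} and \ref{subsubsection:unitors} they satisfy the pentagon and triangle identities on all of $\Freyd(\BC)$ as soon as the restrictions to $\BC$ do, which is exactly what the promonoidal hypothesis gives us. Assembling these pieces yields a monoidal structure on $\Freyd(\BC)$ in the sense of \Cref{def:MonoidalStructure} whose tensor product is componentwise right exact.

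Thus the theorem reduces to organizing the promonoidal data and successively invoking the extension lemmas; there is no genuine obstacle beyond this bookkeeping. The one conceptual point worth flagging is that the statement deliberately does not claim existence of an internal hom on the free abelian category: by \Cref{theorem:homA} such an extension would require $\BC = \Freyd(\AC^{\op})^{\op}$ to admit weak kernels, equivalently $\Freyd(\AC^{\op})$ to admit weak cokernels, which is an extra hypothesis not included here.
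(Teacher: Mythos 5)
Your proposal is correct and matches the paper's intended argument: the paper states this theorem as a direct consequence of the lifting machinery of \Cref{sec:FPMonoidalStructuresOnFreyd} applied to the additive category $\Freyd(\AC^{\op})^{\op}$, exactly as you do. Your closing remark about internal homs requiring weak kernels is also consistent with \Cref{theorem:homA} and with the paper's deliberate omission of any closedness claim here.
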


As a corollary, we also get an extension of a given monoidal structure on $\AC$ to the free abelian category
an thus recover the induced functor defined in \cite[Proposition 1.3]{TensorNori}
by applying the following steps:
\begin{enumerate}
      \item Take the induced monoidal structure on $\AC^{\op}$. Interpret it as an \fp promonoidal structure.
      \item This induces a right exact monoidal structure on $\Freyd( \AC^{\op} )$. 
      \item Interpret the induced monoidal structure on $\Freyd( \AC^{\op} )^{\op}$ as an \fp promonoidal structure.
      \item This induces a right exact monoidal structure on $\Freyd( \Freyd( \AC^{\op} )^{\op} )$, hence on the free abelian category of $\AC$.
\end{enumerate}

\subsection{Implementations of monoidal structures} \label{sec:MonoidalConstructions}
Our results yield a unified approach to the implementation of monoidal structures for \fp modules, \fp graded modules, \fp functors, that has in parts been realized in \cite{FreydCategoriesPackage}. 
Let us therefore explicitly present the underlying constructions. 

\begin{notationnonumber}
      For a protensor product $\T \colon \AC \times \AC \to \Freyd( \AC )$ and objects $a,b \in \AC$,
      we write
      \[
            \T( a,b ) = \left( g_T( a,b) \xleftarrow{\rho_T( a,b )} r_T( a,b ) \right) \in \Freyd( \AC ) \, .
      \]
      For morphisms $a \xrightarrow{\alpha} a^\prime$ and $b \xrightarrow{\beta} b^\prime$ in $\AC$, we represent the morphism $\T( \alpha, \beta ) \in \Mor ( \Freyd( \AC ) )$ by the commutative diagram
      \begin{center}
            \begin{tikzpicture}[baseline=(current bounding box.center),label/.style={postaction={
                      decorate,
                      decoration={markings, mark=at position .5 with \node #1;}},
                      mylabel/.style={thick, draw=black, align=center, minimum width=0.5cm, minimum height=0.5cm,fill=white}}]
            
                      \def\w{4};
                      \def\h{3};
            
                      \node (C) at (\w,\h) {$r_T( a,b)$};
                      \node (A) at (-\w,\h) {$g_T( a,b)$};
                      \node (B) at (-\w,0) {$g_T( a^\prime, b^\prime )$};
                      \node (D) at (\w,0) {$r_T( a^\prime, b^\prime )$};
                      
                      \draw[->,thick,dashed] (C) --node[left]{$\omega_T( \alpha, \beta )$} (D);
                      \draw[->,thick] (A) --node[right]{$\delta_T( \alpha, \beta)$} (B);
                      \draw[->,thick] (C) --node[below]{$\rho_T( a, b )$} (A);
                      \draw[->,thick] (D) --node[above]{$\rho_T( a^\prime, b^\prime )$} (B);
            
                      \node at (0.0*\w,0.5*\h) {$\circlearrowleft$};
            
            \end{tikzpicture}
            \end{center}
            Finally, let us agree that we denote a given protensor unit of $\AC$ by $1 = ( g_1 \xleftarrow{\rho_1} r_1 )$.
\end{notationnonumber}

\subsubsection{Tensor product of morphisms}

\begin{construction} \label{construc:TensorProductOfMorphisms}
The tensor product of two morphism $A \xrightarrow{\{\alpha, \omega_{\alpha}\}} A'$ and $B \xrightarrow{\{\beta, \omega_{\beta}\}} B'$ in $\Freyd( \AC )$ is represented by the following commutative diagram:
\begin{center}
\begin{tikzpicture}[baseline=(current bounding box.center),label/.style={postaction={
          decorate,
          decoration={markings, mark=at position .5 with \node #1;}},
          mylabel/.style={thick, draw=black, align=center, minimum width=0.5cm, minimum height=0.5cm,fill=white}}]

          \def\w{3};
          \def\h{3};

          \node (A2) at (-2*\w,\h) {$A \oF B$};
          \node (B2) at (-2*\w,0) {$A^\prime \oF B^\prime$};
          \draw[->,thick] (A2) --node[left]{$\{\alpha\} \oF \{\beta\}$} (B2);

          \draw[<->, thick, dotted] (-1.7*\w,\h) -- (-1.3*\w,\h);
          \draw[<->, thick, dotted] (-1.7*\w,0) -- (-1.3*\w,0);

          \node (A) at (-\w,\h) {$g_T( a,b )$};
          \node (B) at (-\w,0) {$g_T( a^\prime, b^\prime )$};
          \node (C) at (\w,\h) {$r_T( a,b ) \oplus g_T ( a, r_b ) \oplus g_T ( r_a, b )$};
          \node (D) at (\w,0) {$r_T( a^\prime, b^\prime ) \oplus g_T ( a^\prime, r_{b^\prime} ) \oplus g_T( r_{a^\prime}, b^\prime )$};
          \draw[->,thick,dashed] (C) --node[right] {$\left( \begin{smallmatrix} \omega_T( \alpha, \beta ) & \cdot & \cdot \\ \cdot & \delta_T( \alpha, \omega_\beta ) & \cdot \\ \cdot & \cdot & \delta_T( \omega_\alpha, \beta ) \end{smallmatrix} \right)$} (D);
          \draw[->,thick] (C) --node[below]{$\left( \begin{smallmatrix} \rho_T( a,b ) \\ \delta_T( \mathrm{id}_a, \rho_b ) \\ \delta_T( \rho_a, \mathrm{id}_b ) \end{smallmatrix} \right)$} (A);
          \draw[->,thick] (D) --node[above]{$\left( \begin{smallmatrix} \rho_T( a^\prime,b^\prime ) \\ \delta_T( \mathrm{id}_{a^\prime}, \rho_{b^\prime} ) \\ \delta_T( \rho_{a^\prime}, \mathrm{id}_{b^\prime} ) \end{smallmatrix} \right)$} (B);
          \draw[->,thick] (A) --node[left]{$\delta_T( \alpha, \beta )$} (B);

          \node at (-0.2*\w,0.5*\h) {$\circlearrowleft$};

\end{tikzpicture}
\end{center}
\end{construction}

\begin{proof}[Correctness of construction]
We apply \Cref{construction:extension_of_multi_functor} to the protensor product $\T$. 
Thereby, we observe that $\{\alpha, \omega_{\alpha}\} \oF \{\beta, \omega_{\beta}\}$ can be constructed as 
the induced morphism between the cokernels of the horizontal arrows in the following diagram:
\begin{center}
\begin{tikzpicture}[baseline=(current bounding box.center),label/.style={postaction={
          decorate,
          decoration={markings, mark=at position .5 with \node #1;}},
          mylabel/.style={thick, draw=black, align=center, minimum width=0.5cm, minimum height=0.5cm,fill=white}}]

          \def\w{4};
          \def\h{2.5};

          \node (A) at (-\w,\h) {$\T(a,b)$};
          \node (B) at (-\w,0) {$\T(a^\prime, b^\prime)$};
          \node (C) at (\w,\h) {$\T(a,r_b) \oplus \T(r_a, b)$};
          \node (D) at (\w,0) {$\T(a^\prime, r_{b^\prime} ) \oplus \T(r_{a^\prime}, b^\prime )$};

          \draw[->,thick,dashed] (C) --node[right] {$\left( \begin{smallmatrix} \T( \delta_\alpha, \omega_{\beta} ) & \cdot \\ \cdot & \T( \omega_\alpha, \delta_\beta ) \end{smallmatrix} \right)$} (D);
          \draw[->,thick] (C) --node[below]{$\left( \begin{smallmatrix} \T( \mathrm{id}_a, \rho_b ) \\ \T( \rho_a, \mathrm{id}_b )\end{smallmatrix} \right)$} (A);
          \draw[->,thick] (D) --node[above]{$\left( \begin{smallmatrix} \T( \mathrm{id}_{a^\prime}, \rho_{b^\prime} ) \\ \T( \rho_{a^\prime}, \mathrm{id}_{b^\prime} ) \end{smallmatrix} \right)$} (B);
          \draw[->,thick] (A) --node[right]{$\T( \delta_\alpha, \delta_\beta )$} (B);

          \node at (0.0*\w,0.5*\h) {$\circlearrowleft$};

\end{tikzpicture}
\end{center}
We can express this diagram purely in terms of objects and morphisms of $\AC$, namely
\begin{center}
\begin{tikzpicture}[baseline=(current bounding box.center),label/.style={postaction={
          decorate,
          decoration={markings, mark=at position .5 with \node #1;}},
          mylabel/.style={thick, draw=black, align=center, minimum width=0.5cm, minimum height=0.5cm,fill=white}}]

          \def\w{4.5};
          \def\h{3.5};

          \node (A) at (-\w,\h) {$\left( g_T( a,b ) \xleftarrow{\rho_T( a,b )} r_T( a,b ) \right)$};
          \node (B) at (-\w,0) {$\left( g_T( a^\prime,b^\prime ) \xleftarrow{\rho_T( a^\prime,b^\prime )} r_T( a^\prime,b^\prime ) \right)$};
          \node (C) at (\w,\h) {$\left( \begin{smallmatrix} g_T( a, r_b ) \\ \oplus \\ g_T( r_a, b ) \end{smallmatrix} \xleftarrow{\left( \begin{smallmatrix} \rho_T( a, r_b ) & \cdot \\ \cdot & \rho_T( r_a, b ) \end{smallmatrix}\right)} \begin{smallmatrix} r_T( a, r_b ) \\ \oplus \\ r_T( r_a, b ) \end{smallmatrix} \right)$};
          \node (D) at (\w,0) {$\left( \begin{smallmatrix} g_T( a^\prime, r_{b^\prime} ) \\ \oplus \\ g_T( r_{a^\prime}, b^\prime ) \end{smallmatrix} \xleftarrow{\left( \begin{smallmatrix} \rho_T( a^\prime, r_{b^\prime} ) & \cdot \\ \cdot & \rho_T( r_{a^\prime}, b^\prime ) \end{smallmatrix}\right)} \begin{smallmatrix} r_T( a^\prime, r_{b^\prime} ) \\ \oplus \\ r_T( r_{a^\prime}, b^\prime ) \end{smallmatrix} \right)$};

          \draw[->,thick,dashed] (C) --node[right] {$\left\{ \left( \begin{smallmatrix} \delta_T ( \delta_\alpha, \omega_{\beta} ) & \cdot \\ \cdot & \delta_T ( \omega_\alpha, \delta_\beta ) \end{smallmatrix} \right) \right\}$} (D);
          \draw[->,thick] (C) --node[below]{$\left\{ \left( \begin{smallmatrix} \delta_T( \mathrm{id}_a, \rho_b ) \\ \delta_T( \rho_a, \mathrm{id}_b ) \end{smallmatrix} \right) \right\}$} (A);
          \draw[->,thick] (D) --node[above]{$\left\{ \left( \begin{smallmatrix} \delta_T( \mathrm{id}_{a^\prime}, \rho_{b^\prime} ) \\ \delta_T( \rho_{a^\prime}, \mathrm{id}_{b^\prime} ) \end{smallmatrix} \right) \right\}$} (B);
          \draw[->,thick] (A) --node[right]{$\left\{ \delta_T( \alpha, \beta ) \right\}$} (B);

          \node at (0.0*\w,0.5*\h) {$\circlearrowleft$};
\end{tikzpicture}
\end{center}
Now, the claim follows by the explicit constructions of cokernels in Freyd categories given in \Cref{construction:cokernels_in_freyd}.
\end{proof}

\subsubsection{Tensor unit and unitors}

\begin{construction}[Tensor unit]
The tensor unit $\unitF$ of $\Freyd( \AC )$ is identical to the protensor unit $1 = ( g_1 \xleftarrow{\rho_1} r_1 )$ of $\AC$.
\end{construction}
\begin{proof}[Correctness of construction]
See Subsubsection \ref{subsubsection:unitors}.
\end{proof}

Recall that we denote the \fp left prounitor by $\LU_a$ for $a \in \AC$.

\begin{construction}[Left unitor]
For $A \in \Freyd( \AC )$, the left unitor 
\[ \LUhat_A \colon \unitF \oF A \xrightarrow{\sim} A \]
is given by the following commutative diagram
\begin{center}
\begin{tikzpicture}[baseline=(current bounding box.center),label/.style={postaction={
          decorate,
          decoration={markings, mark=at position .5 with \node #1;}},
          mylabel/.style={thick, draw=black, align=center, minimum width=0.5cm, minimum height=0.5cm,fill=white}}]

          \def\w{6.5};
          \def\h{2.5};

          \node (A2) at (-2*\w,0) {$\unitF \oF A$};
          \node (B2) at (-2*\w,-\h) {$A$};
          \draw[->,thick] (A2) --node[left]{$\LUhat_A$} (B2);
          
          \draw[<->, thick, dotted] (-1.75*\w,0) -- (-1.25*\w,0);
          \draw[<->, thick, dotted] (-1.75*\w,-\h) -- (-1.25*\w,-\h);
          
          \node (A) at (-\w,0) {$g_T( g_1, a )$};
          \node (B) [align=right] at (0,0) {$r_T( g_1, a )$ \\ $\oplus g_T( r_1, a )$ \\ $\oplus g_T( g_1,r_a )$};
          \node (C) at (-\w,-\h) {$a$};
          \node (D) at (0,-\h) {$r_a$};
          \draw[->,thick] (B) --node[above]{$\left( \begin{smallmatrix} \rho_T( g_1, a ) \\ \delta_T( \rho_1, \mathrm{id}_a ) \\ \delta_T( \mathrm{id}_{g_1}, \rho_a ) \end{smallmatrix} \right)$} (A);
          \draw[->,thick] (D) --node[below]{$\rho_a$} (C);
          \draw[->,thick] (A) --node[left]{$\delta_{\LU_a}$} (C);
          \draw[->,thick,dashed] (B) --node[left]{$\left( \begin{smallmatrix} 0 \\ 0 \\ \delta_{\LU_{r_a}} \end{smallmatrix} \right)$} (D);

          \node at (-0.50*\w,-0.5*\h) {$\circlearrowleft$};

\end{tikzpicture}
\end{center}
\end{construction}

\begin{proof}[Correctness of construction]
The left unitor $\LUhat_A$ is understood as the unique morphism which turns the following diagram into a commutative diagram with exact columns.
\begin{center}
\begin{tikzpicture}[label/.style={postaction={
          decorate,
          decoration={markings, mark=at position .5 with \node #1;}},
          mylabel/.style={thick, draw=black, align=center, minimum width=0.5cm, minimum height=0.5cm,fill=white}}]

          \def\w{6.5};
          \def\h{2.5};

          \node (A) at (-\w,0) {$\left( g_T( g_1, r_a ) \xleftarrow{\left( \begin{smallmatrix} \rho_T( g_1, r_a ) \\ \delta_T( \rho_1, \mathrm{id}_{r_a} ) \end{smallmatrix} \right)} r_T( g_1, r_a ) \oplus g_T( r_1, r_a ) \right)$};
          \node (B) at (0,0) {$( r_a \xleftarrow{} 0 )$};
          \node (C) at (-\w,-\h) {$\left( g_T( g_1, a ) \xleftarrow{\left( \begin{smallmatrix} \rho_T( g_1, a ) \\ \delta_T( \rho_1, \mathrm{id}_a ) \end{smallmatrix} \right)} r_T( g_1, a ) \oplus g_T( r_1, a ) \right)$};
          \node (D) at (0,-\h) {$( a \xleftarrow{} 0 )$};
          \node (E) at (-\w,-2*\h) {$1 \oF A$};
          \node (F) at (0,-2*\h) {$A$};
          \node (G) at (-\w,-2.5*\h) {$0$};
          \node (H) at (0,-2.5*\h) {$0$};

          \draw[->,thick] (A) --node[below]{$\LU_{r_a}$} (B);
          \draw[->,thick] (C) --node[above]{$\LU_{a}$} (D);
          \draw[->,thick] (A) --node[left]{$\{ \delta_T( \mathrm{id}_{g_1}, \rho_a ) \}$} (C);
          \draw[->,thick] (B) --node[right]{$\{ \rho_a \}$} (D);
          \draw[->,thick] (E) --node[above]{$\LUhat_A$} (F);
          \draw[->,thick] (D) -- (F);
          \draw[->,thick] (E) -- (G);
          \draw[->,thick] (C) -- (E);
          \draw[->,thick] (F) -- (H);

          \node at (-0.50*\w,-0.5*\h) {$\circlearrowleft$};
          \node at (-0.50*\w,-1.5*\h) {$\circlearrowleft$};

\end{tikzpicture} \qedhere
\end{center}
\end{proof}

\begin{construction}[Left unitor inverse]
For $A \in \Freyd( \AC )$, the left unitor inverse 
\[ \LUhat_A^{-1} \colon  A \xrightarrow{\sim} \unitF \oF A \]
is given by the following commutative diagram
\begin{center}
\begin{tikzpicture}[baseline=(current bounding box.center),label/.style={postaction={
          decorate,
          decoration={markings, mark=at position .5 with \node #1;}},
          mylabel/.style={thick, draw=black, align=center, minimum width=0.5cm, minimum height=0.5cm,fill=white}}]

          \def\w{6.5};
          \def\h{2.5};

          \node (A2) at (-2*\w,0) {$A \oF \unitF$};
          \node (B2) at (-2*\w,-\h) {$A$};
          \draw[->,thick] (B2) --node[left]{$\LUhat_A^{-1}$} (A2);
          
          \draw[<->, thick, dotted] (-1.75*\w,0) -- (-1.25*\w,0);
          \draw[<->, thick, dotted] (-1.75*\w,-\h) -- (-1.25*\w,-\h);
          
          \node (A) at (-\w,0) {$g_T( g_1, a )$};
          \node (B) [align=right] at (0,0) {$r_T( g_1, a )$ \\ $\oplus g_T( r_1, a )$ \\ $\oplus g_T( g_1,r_a )$};
          \node (C) at (-\w,-\h) {$a$};
          \node (D) at (0,-\h) {$r_a$};
          \draw[->,thick] (B) --node[above]{$\left( \begin{smallmatrix} \rho_T( g_1, a ) \\ \delta_T( \rho_1, \mathrm{id}_a ) \\ \delta_T( \mathrm{id}_{g_1}, \rho_a ) \end{smallmatrix} \right)$} (A);
          \draw[->,thick] (D) --node[below]{$\rho_a$} (C);
          \draw[->,thick] (C) --node[left]{$\delta_{\LU_a^{-1}}$} (A);
          \draw[->,thick,dashed] (D) --node[left]{$\left( \begin{smallmatrix} 0 & 0 & \delta_{\LU_{r_a}^{-1}} \end{smallmatrix} \right)$} (B);

          \node at (-0.50*\w,-0.5*\h) {$\circlearrowleft$};

\end{tikzpicture}
\end{center}
\end{construction}

\begin{proof}[Correctness of construction]
The same reasoning as for the left unitor applies.
\end{proof}

\begin{remark}
The constructions of the right unitor and its inverse follow analogously.
\end{remark}

\subsubsection{Associator}

Recall that we denote the \fp proassociator by $\Ass_{a,b,c}$ for $a,b,c \in \AC$.

\begin{construction}[Associator]
Given $A,B,C \in \Freyd( \AC )$, the associator 
\[ \Asshat_{A,B,C} \colon  A \oF \left( B \oF C \right) \xrightarrow{\sim} \left( A \oF B \right) \oF C \]
is given by the following commutative diagram, in which $\omega_{\Ass_{a,b,c}}$ denotes the $2 \times 2$ witness-matrix of the \fp proassociator $\Ass_{a,b,c}$:
\begin{center}
\begin{tikzpicture}[baseline=(current bounding box.center),label/.style={postaction={
          decorate,
          decoration={markings, mark=at position .5 with \node #1;}},
          mylabel/.style={thick, draw=black, align=center, minimum width=0.5cm, minimum height=0.5cm,fill=white}}]

          \def\w{3.5};
          \def\h{3.0};

          \node (A2) at (-1.3*\w,-3*\h) {$\left( A \oF B \right) \oF C$};
          \node (B2) at (-1.3*\w,-\h) {$A \oF \left( B \oF C \right)$};
          \draw[->,thick] (B2) --node[left]{$\Asshat_{A,B,C}$} (A2);
          
          \draw[<->, thick, dotted] (-0.88*\w,-\h) -- (-0.4*\w,-\h);
          \draw[<->, thick, dotted] (-0.88*\w,-3*\h) -- (-0.4*\w,-3*\h);
          
          \node (A) at (0,-3*\h) {$g_T( g_T( a,b ), c )$};
          \node (B) at (0,-\h) {$g_T( a, g_T( b,c ) )$};
          \node (C) [align=center] at (2*\w,-3*\h) {$r_T( g_T( a,b ), c )$ \\ $\oplus g_T( r_T( a,b ), c )$ \\ $\oplus g_T ( g_T( a,b ), r_c )$ \\ $\oplus g_T( g_T( a, r_b ), c )$ \\ $\oplus g_T( g_T( r_a, b ), c )$};
          \node (D) [align=center] at (2*\w,-\h) {$r_T( a, g_T( b,c ) )$ \\ $\oplus g_T ( a, r_T( b, c ) )$ \\ $\oplus g_T( a, g_T( b, r_c ) )$ \\ $\oplus g_T( a, g_T( r_b, c ) )$ \\ $\oplus g_T( r_a, g_T( b, c ) )$};

          \draw[->,thick] (B) --node[left]{$\delta_{\Ass_{a,b,c}}$} (A);
          \draw[->,thick] (C) --node[above]{$\left( \begin{smallmatrix} \rho_T( g_T( a,b ), c ) \\ \delta_T( \rho_T( a,b ), \mathrm{id}_c ) \\ \delta_T( g_T( a,b ), \rho_c ) \\ \delta_T( \delta_T ( \mathrm{id}_a, \rho_b ), \mathrm{id}_c ) \\ \delta_T( \delta_T ( \rho_a, \mathrm{id}_b ), \mathrm{id}_c ) \end{smallmatrix} \right)$} (A);
          \draw[->,thick] (D) --node[below]{$\left( \begin{smallmatrix} \rho_T( a, g_T( b,c ) ) \\ \delta_T( \mathrm{id}_a, \rho_T( b, c ) ) \\ \delta_T( \mathrm{id}_a, \delta_T( \mathrm{id}_b, \rho_c ) ) \\ \delta_T( \mathrm{id}_a, \delta_T ( \rho_b, \mathrm{id}_c ) ) \\ \delta_T( \rho_a, \mathrm{id}_{g_T( b,c )} ) \end{smallmatrix} \right)$} (B);
          \draw[->,thick,dashed] (D) --node{$\left( \begin{smallmatrix} \omega_{\Ass_{a,b,c}} \\ & & \delta_{\Ass_{a,b,r_c}} \\ & & & \delta_{\Ass_{a,r_b, c}} \\ & & & & \delta_{\Ass_{r_a, b, c}} \end{smallmatrix} \right)$} (C);
          
          \node at (0.95*\w,-2.0*\h) {$\circlearrowleft$};

\end{tikzpicture}
\end{center}
\end{construction}

\begin{proof}[Correctness of construction]
Let us first define the functors
\begin{align*}
A_R & \colon \AC \times \AC \times \AC \to \Freyd( \AC ) \, , \, (a,b,c) \mapsto a \oF ( b \oF c ) \, , \\
A_L & \colon \AC \times \AC \times \AC \to \Freyd( \AC ) \, , \, (a,b,c) \mapsto ( a \oF b ) \oF c
\end{align*}
and recall that the proassociator is a natural transformation $\Ass \colon A_R \Rightarrow A_L$. Hence, by applying \Cref{construction:extension_of_multi_nat} we extend the proassociator to a natural transformation $\Asshat \colon \widehat{A}_R \Rightarrow \widehat{A}_L$. Its component at $A,B,C \in \Freyd( \AC )$ can be constructed as the induced morphism
between the cokernels of the horizontal arrows in the following diagram:
\begin{center}
\begin{tikzpicture}[baseline=(current bounding box.center),label/.style={postaction={
          decorate,
          decoration={markings, mark=at position .5 with \node #1;}},
          mylabel/.style={thick, draw=black, align=center, minimum width=0.5cm, minimum height=0.5cm,fill=white}}]

          \def\w{7};
          \def\h{3};

          \node (B2) at (0,0) {$A_R( a,b,c )$};
          \node (B3) at (\w,0) {$A_R( a,b,r_c ) \oplus A_R( a, r_b, c ) \oplus A_R( r_a, b, c )$};

          \node (G2) at (0,-\h) {$A_L( a,b,c )$};
          \node (G3) at (\w,-\h) {$A_L( a,b,r_c ) \oplus A_L( a,r_b,c ) \oplus A_L( r_a,b,c )$};

          \draw[->,thick] (B3) --node[below]{$\left( \begin{smallmatrix} A_R( a,b,\rho_c) \\ A_R( a,\rho_b,c) \\ A_R( \rho_a, b, c)\end{smallmatrix} \right)$} (B2);
          \draw[->,thick] (G3) --node[above]{$\left( \begin{smallmatrix} A_L( a,b,\rho_c) \\ A_L( a,\rho_b,c) \\ A_L( \rho_a, b, c)\end{smallmatrix} \right)$} (G2);
          \draw[->,thick] (B2) --node[left]{$\Ass_{a,b,c}$} (G2);
          \draw[->,thick] (B3) --node[right]{$\left( \begin{smallmatrix} \Ass_{a,b,r_c} \\ & \Ass_{a,r_b,c} \\ & & \Ass_{r_a,b,c}\end{smallmatrix} \right)$} (G3);

          \node at (0.5*\w,-0.5*\h) {$\circlearrowleft$};

\end{tikzpicture}
\end{center}
Again, the claim follows by the explicit constructions of cokernels in Freyd categories given
in \Cref{construction:cokernels_in_freyd}.
\end{proof}

\subsubsection{Braiding}

Recall that we denote the \fp probraiding by $\Br_{a,b}$ for $a,b, \in \AC$.

\begin{construction}
For objects $A,B \in \Freyd( \AC )$, the braiding morphism 
\[ \Brhat_{A,B} \colon A \oF B \to B \oF A \]
is given by the following diagram
\begin{center}
\begin{tikzpicture}[baseline=(current bounding box.center),label/.style={postaction={
          decorate,
          decoration={markings, mark=at position .5 with \node #1;}},
          mylabel/.style={thick, draw=black, align=center, minimum width=0.5cm, minimum height=0.5cm,fill=white}}]

          \def\w{3.5};
          \def\h{3};

          \node (A) at (-\w,\h) {$g_T( a,b )$};
          \node (B) at (-\w,0) {$g_T( b, a )$};
          \node (C) at (\w,\h) {$r_T( a,b ) \oplus g_T ( a, r_b ) \oplus g_T ( r_a, b )$};
          \node (D) at (\w,0) {$r_T( b, a ) \oplus g_T ( b, r_a ) \oplus g_T( r_b, a )$};
          \node (E) at (-2*\w,0) {$B \oF A$};
          \node (F) at (-2*\w,\h) {$A \oF B$};
          
          \draw[->,thick,dashed] (C) --node[right] {$\left( \begin{smallmatrix} \omega_{\Br_{a,b}} & 0 & 0 \\ 0 & 0 & \delta_{\Br_{a,r_b}} \\ 0 & \delta_{\Br_{r_a,b}} & 0  \end{smallmatrix} \right)$} (D);
          \draw[->,thick] (C) --node[below]{$\left( \begin{smallmatrix} \rho_T( a,b ) \\ \delta_T( \mathrm{id}_a, \rho_b ) \\ \delta_T( \rho_a, \mathrm{id}_b ) \end{smallmatrix} \right)$} (A);
          \draw[->,thick] (D) --node[above]{$\left( \begin{smallmatrix} \rho_T( b,a ) \\ \delta_T( \mathrm{id}_{b}, \rho_{a} ) \\ \delta_T( \rho_{b}, \mathrm{id}_{a} ) \end{smallmatrix} \right)$} (B);
          \draw[->,thick] (A) --node[left]{$\delta_{\Br_{a,b}}$} (B);
          \draw[->,thick] (F) --node[left]{$\widehat{\Br}_{A,B}$} (E);
          \draw[<->,thick,dotted] (-1.65*\w,\h) -- (-1.25*\w,\h);
          \draw[<->,thick,dotted] (-1.65*\w,0) -- (-1.25*\w,0);
          
          \node at (0.6*\w,0.5*\h) {$\circlearrowleft$};

\end{tikzpicture}
\end{center}
\end{construction}

\begin{proof}[Correctness of construction]
Let us define the two functors
\[ \T_L \colon \AC \times \AC \to \AC \, , \, (a,b) \mapsto \T( a,b ) \, , \qquad \T_R \colon \AC \times \AC \to \AC \, , \, (a,b) \mapsto \T( b,a ) \, . \]
To extend the braiding $\Br \colon \T_L \Rightarrow \T_R$, we apply \Cref{construction:extension_of_multi_nat}. 
This shows that the component $\Brhat_{A,B}$ can be constructed as the induced morphism
between the cokernels of the horizontal arrows in the following diagram:
\begin{center}
\begin{tikzpicture}[baseline=(current bounding box.center),label/.style={postaction={
          decorate,
          decoration={markings, mark=at position .5 with \node #1;}},
          mylabel/.style={thick, draw=black, align=center, minimum width=0.5cm, minimum height=0.5cm,fill=white}}]

          \def\w{9};
          \def\h{2.5};

          \node (A) at (0,0) {$T_L( a, b )$};
          \node (B) at (\w,0) {$T_L(a, r_b ) \oplus T_L( r_a, b )$};
          \node (C) at (0,-\h) {$T_R( a, b )$};
          \node (D) at (\w,-\h) {$T_R( a, r_b ) \oplus T_R(r_a, b )$};

          \draw[->,thick] (B) --node[above]{$\pmatrow{\T_L \left( \mathrm{id}_a, \rho_b \right)}{\T_L \left( \rho_a, \mathrm{id}_b \right)}$} (A);
          \draw[->,thick] (D) --node[above]{$\pmatrow{\T_R \left( \mathrm{id}_a, \rho_b \right)}{\T_R \left( \rho_a, \mathrm{id}_b \right)}$} (C);
          \draw[->,thick] (A) --node[right]{$\Br_{a,b}$} (C);
          \draw[->,thick] (B) --node[right]{$\begin{pmatrix}{\Br_{a,r_b}} & \cdot \\ \cdot &\Br_{r_a,b} \end{pmatrix}$} (D);

          \node at (0.5*\w,-0.5*\h) {$\circlearrowleft$};

\end{tikzpicture}
\end{center}
We can represent this diagram with objects and morphisms of the category $\AC$ only, namely:
\begin{center}
\resizebox{\textwidth}{!}{%
\begin{tikzpicture}[baseline=(current bounding box.center),label/.style={postaction={
          decorate,
          decoration={markings, mark=at position .5 with \node #1;}},
          mylabel/.style={thick, draw=black, align=center, minimum width=0.5cm, minimum height=0.5cm,fill=white}}]

          \def\w{11};
          \def\h{4};

          \node (A) at (0,0) {$\left( g_T( a, b) \xleftarrow{\rho_T( a, b)} r_T( a, b ) \right)$};
          \node (B) at (\w,0) {$\left( \begin{array}{c} g_T( a, r_b ) \\ \oplus \\ g_T( r_a, b ) \end{array} \xleftarrow{\left( \begin{smallmatrix} \rho_T( a, r_b ) & \cdot \\ \cdot & \rho_T( r_a, b ) \end{smallmatrix} \right)} \begin{array}{c} r_T( a, r_b ) \\ \oplus \\ r_T( r_a, b ) \end{array} \right)$};
          \node (C) at (0,-\h) {$\left( g_T( b, a ) \xleftarrow{\rho_T( b, a )} r_T( b, a ) \right)$};
          \node (D) at (\w,-\h) {$\left( \begin{array}{c} g_T( b, r_a ) \\ \oplus \\ g_T( r_b, a ) \end{array} \xleftarrow{\left( \begin{smallmatrix} \rho_T( b, r_a ) & \cdot \\ \cdot & \rho_T( r_b, a ) \end{smallmatrix} \right)} \begin{array}{c} r_T( b, r_a ) \\ \oplus \\ r_T( r_b, a ) \end{array} \right)$};

          \draw[->,thick] (B) --node[above]{$\left\{ \left( \begin{smallmatrix} \delta_T \left( \mathrm{id}_a, \rho_b \right) & \cdot \\ \cdot & \delta_T \left( \rho_a, \mathrm{id}_b \right) \end{smallmatrix} \right) \right\}$} (A);
          \draw[->,thick] (D) --node[above]{$\left\{ \left( \begin{smallmatrix} \delta_T \left( \mathrm{id}_b, \rho_a \right) & \cdot \\ \cdot & \delta_T \left( \rho_b, \mathrm{id}_a \right) \end{smallmatrix} \right) \right\}$} (C);
          \draw[->,thick] (A) --node[right]{$\left\{ \delta_{\Br} (a,b) \right\}$} (C);
          \draw[->,thick] (B) --node[left]{$\left\{ \left( \begin{smallmatrix} \cdot & \delta_{\Br} ( r_a,b ) \\ \delta_{\Br} ( a,r_b ) & \cdot \end{smallmatrix} \right)\right\}$} (D);

          \node at (0.5*\w,-0.5*\h) {$\circlearrowleft$};

\end{tikzpicture}}
\end{center}
This leads to the presented construction diagram.
\end{proof}

\def\cprime{$'$} \def\cprime{$'$} \def\cprime{$'$} \def\cprime{$'$}
  \def\cprime{$'$}
\providecommand{\bysame}{\leavevmode\hbox to3em{\hrulefill}\thinspace}
\providecommand{\MR}{\relax\ifhmode\unskip\space\fi MR }
\providecommand{\MRhref}[2]{%
  \href{http://www.ams.org/mathscinet-getitem?mr=#1}{#2}
}
\providecommand{\href}[2]{#2}

\end{document}